\newenvironment{enumerate*}%
  {\begin{enumerate}[(I)]%
    \setlength{\itemsep}{10pt}%
    \setlength{\parskip}{0pt}}%
  {\end{enumerate}}
\newtheorem{theorem}{Theorem}[section]
\newtheorem{proposition}[theorem]{Proposition}
\newtheorem{corollary}[theorem]{Corollary}
\newtheorem{lemma}[theorem]{Lemma}
\theoremstyle{definition}
\newtheorem{remark}[theorem]{Remark}
\newtheorem{example}[theorem]{Example}
\DeclareMathOperator{\area}{area}
\DeclareMathOperator{\width}{width}
\DeclareMathOperator{\height}{height}
\DeclareMathOperator{\Regions}{Regions}
\DeclareMathOperator{\Loops}{Loops}
\DeclareMathOperator{\length}{length}
\DeclareMathOperator{\lat}{lat}
\DeclareMathOperator{\lon}{long}
\DeclareMathOperator{\vslice}{\mathsf{vslice}}
\DeclareMathOperator{\hslice}{\mathsf{hslice}}
\DeclareMathOperator{\ver}{vert}
\DeclareMathOperator{\hor}{hor}
\newcommand{\dfn}[1]{\textcolor{blue}{\emph{#1}}}
\begin{document}

\title[]{Loops and Regions in Hitomezashi Patterns}
\subjclass[2010]{}

\author[Colin Defant and Noah Kravitz]{Colin Defant}
\address[]{Department of Mathematics, Princeton University, Princeton, NJ 08540, USA}
\email{cdefant@princeton.edu}
\author[]{Noah Kravitz}
\address[]{Department of Mathematics, Princeton University, Princeton, NJ 08540, USA}
\email{nkravitz@princeton.edu}

\maketitle

\begin{abstract}
\emph{Hitomezashi patterns}, which originate from traditional Japanese embroidery, are intricate arrangements of unit-length line segments called \emph{stitches}. The stitches connect to form \emph{hitomezashi strands} and \emph{hitomezashi loops}, which divide the plane into regions. We investigate the deeper mathematical properties of these patterns, which also feature prominently in the study of corner percolation.  It was previously known that every loop in a hitomezashi pattern has odd width and odd height.  We additionally prove that such a loop has length congruent to $4$ modulo $8$ and area congruent to $1$ modulo $4$.  Although these results are simple to state, their proofs require us to understand the delicate topological and combinatorial properties of \emph{slicing} operations that can be applied to hitomezashi patterns.  We also show that the expected number of regions in a random $m\times n$ hitomezashi pattern (chosen according to a natural random model) is asymptotically $\left(\frac{\pi^2-9}{12}+o(1)\right)mn$.

\end{abstract}

\section{Introduction}\label{SecIntro}

\emph{Sashiko} is a Japanese style of embroidery that developed during the Edo period (1603--1867) for both practical and decorative purposes. Its name, which translates to ``little stabs,'' refers to the act of stabbing a needle through cloth. One type of sashiko artwork known as \emph{hitomezashi} (translating to ``one-stitch'') uses simple rules to create beautiful geometric patterns in a rectangular grid (see Figure~\ref{FigHito1}). We became aware of hitomezashi stitching patterns through a video \cite{Numberphile} on Brady Haran's YouTube channel \emph{Numberphile}. The video stars Ayliean MacDonald, who describes how to create hitomezashi patterns with a pen and paper and also suggests several interesting questions.  We encourage you to go watch the video; we'll be here when you come back. 

\begin{figure}[ht]
  \begin{center}\includegraphics[height=4.59cm]{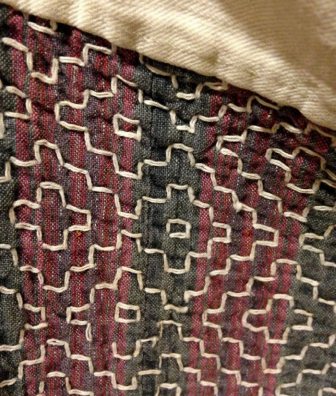}\qquad\qquad\qquad \includegraphics[height=4.59cm]{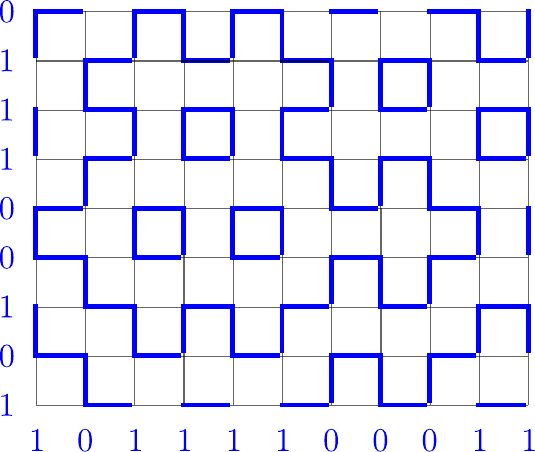}
  \end{center}
  \caption{On the left is a photo taken by Carol Hayes of a hitomezashi pattern on an apron from the Amuse Museum collection \cite{HayesSeaton}. On the right is an $8\times 10$ hitomezashi pattern. }\label{FigHito1}
\end{figure}

In case you didn't watch the video, we'll describe the procedure here. Begin with an $m\times n$ rectangular grid. Let us imagine that the grid is embedded in $\mathbb R^2$ so that the vertices are the points $(i,j)\in\mathbb Z^2$ with $0\leq i\leq n$ and $0\leq j\leq m$. We use the words \emph{north}, \emph{south}, \emph{east}, and \emph{west} to indicate directions in the grid, and we refer to the $x$-coordinate of a vertical line or line segment (respectively, the $y$-coordinate of a horizontal line or line segment) as its \dfn{longitude} (respectively, \dfn{latitude}).
We also coordinatize unit-length line segments via their midpoints. Thus, for integers $i$ and $j$, the vertical line segment between the points $(i,j)$ and $(i,j+1)$ is said to have longitude $i$ and latitude $j+1/2$. Similarly, the horizontal line segment between the points $(i,j)$ and $(i+1,j)$ has longitude $i+1/2$ and latitude $j$. We write $\lon(s)$ and $\lat(s)$ for the longitude and latitude, respectively, of a unit-length segment $s$.

Label each of the $m+1$ horizontal grid lines and each of the $n+1$ vertical grid lines with either a $0$ or a $1$. Suppose the vertical grid line with longitude $i$ has label $\epsilon_i\in\{0,1\}$. Draw all of the line segments with longitude $i$ that lie inside the grid and have latitude $j+1/2$ for some $j\equiv \epsilon_i\pmod 2$. We call these line segments \dfn{vertical stitches} because they are meant to represent the parts of the thread that pass over the cloth. We do not draw the line segments of longitude $i$ and latitude $j+1/2$ for $j\equiv \epsilon_i+1\pmod 2$ because these represent the parts of the thread that pass underneath the cloth. Repeat this process for every $0\leq i\leq n$. Similarly, if the horizontal grid line with latitude $j$ has label $\eta_j$, then we draw all of the line segments with latitude $j$ and longitude $i+1/2$ for some $i\equiv\eta_j\pmod 2$. These line segments are called \dfn{horizontal stitches}. We illustrate this construction on the right in Figure~\ref{FigHito1}.  The end product is called an \dfn{$m \times n$ hitomezashi pattern}.  It is often mathematically convenient to consider the infinite configuration that results from applying this procedure to the entire plane instead of an $m\times n$ grid. Such an infinite configuration is called simply a \dfn{hitomezashi pattern}.

\begin{figure}[ht]
 \centering
 \includegraphics[height=8.5cm]{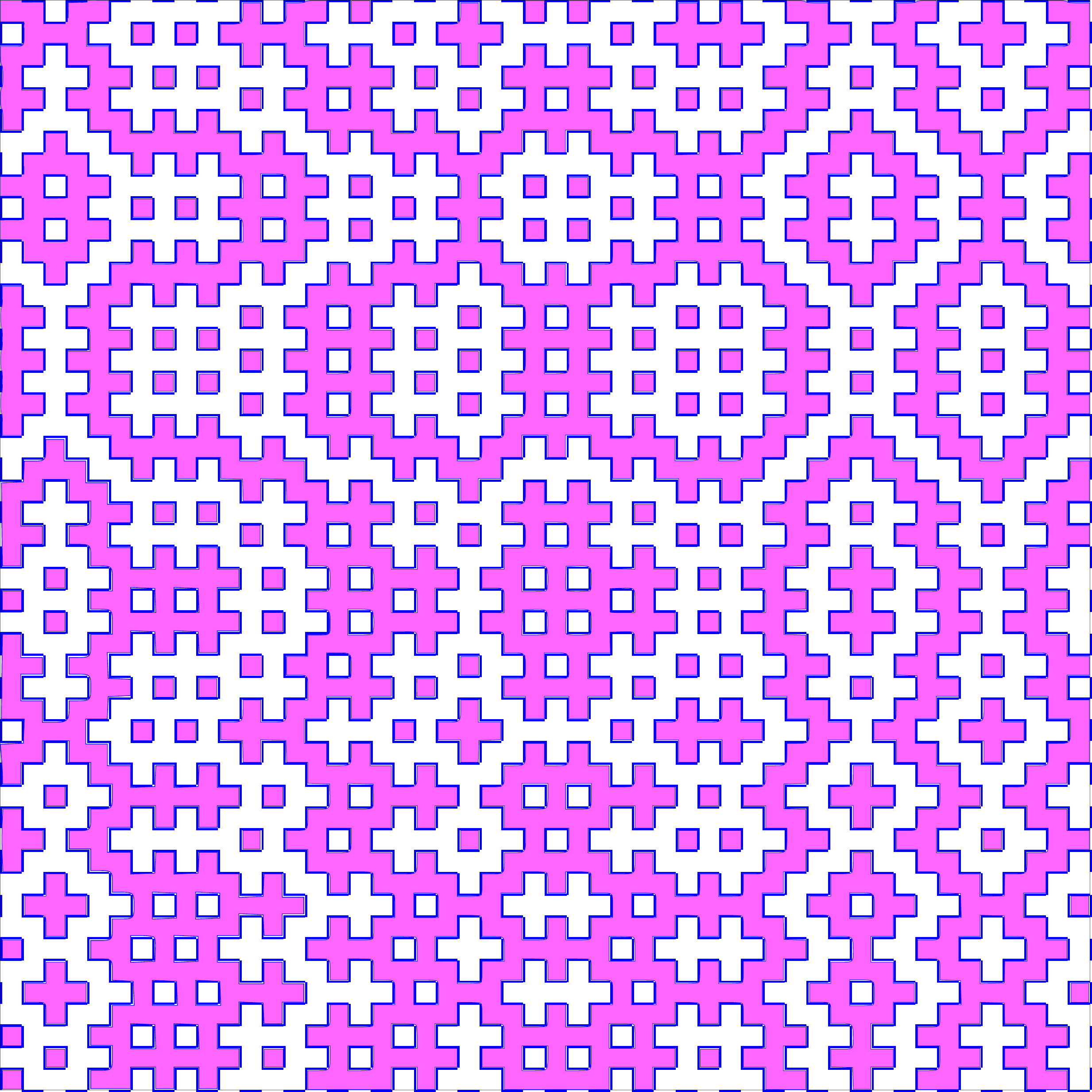}
  \caption{A $50\times 50$ hitomezashi pattern generated by choosing the grid line labels randomly. We have colored the regions in pink and white.}\label{FigHito4}
\end{figure}

Hitomezashi stitching has become a popular example of mathematics-inspired art.  These patterns are also appealing to mathematicians because they seem to have deeper mathematical properties---even a quick glance at the examples above suggests many types of structure and symmetry.  (See also \cite{Holden} for a mathematical analysis of other types of embroidery.)  Our goal in this article is to identify some of these properties and analyze them in precise terms.

Coming from a completely different direction, Pete~\cite{Pete} studied (something equivalent to) Hitomezashi patterns in the context of percolation  theory; we will say more about his work shortly.

A \dfn{hitomezashi loop} is a closed curve formed from stitches in a hitomezashi pattern. Hitomezashi loops are the boundaries of special types of \emph{polyominoes}, which are combinatorial geometric objects that have appeared prominently in recreational mathematics, combinatorics, and statistical physics \cite{Barequet, Conway, ConwayGuttmann, Gardner1960, Golomb, GuttmannBook, Jensen, Klarner}. Despite the simplicity of the procedure used to create hitomezashi patterns, a thorough understanding of all possible hitomezashi loops appears quite nontrivial. In \cite{HayesSeaton}, Hayes and Seaton observe that each of the cross-sections of the hitomezashi loop 
\[\begin{array}{l}\includegraphics[height=1.55cm]{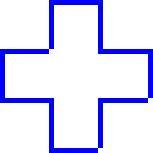}\end{array}\] contains an odd number of boxes; they remark that this is significant because ``even numbers are considered less favorably than odd numbers in Japanese culture.'' In Section~\ref{sec:basic}, we will use a simple parity argument to show that this property of cross-sections holds for all hitomezashi loops (see Proposition~\ref{prop:cross-section}).

In the original version of this article, our main effort was proving that every hitomezashi loop has odd width and odd height. In light of the aforementioned comment about Japanese culture, this shows that hitomezashi loops should be viewed favorably! However, we subsequently learned that some of our results were already discovered in an article by Pete \cite{Pete}, who used the name \emph{corner percolation} instead of \emph{hitomezashi patterns}. It appears that until now, people interested in corner percolation were unaware of hitomezashi and vice versa.  By interpreting hitomezashi loops in terms of pairs of Dyck paths, Pete established several structural results, and an immediate consequence of his work is that all hitomezashi loops have odd width and odd height.

\begin{theorem}[Pete~\cite{Pete}]\label{ThmOdd}
Every hitomezashi loop has odd width and odd height.
\end{theorem}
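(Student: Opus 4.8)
The plan is to first pin down the local structure that makes loops so rigid. For a vertex $(i,k)$, exactly one of its two vertical stitches and one of its two horizontal stitches is drawn: the vertical stitch points north iff $k\equiv\epsilon_i\pmod 2$, and the horizontal stitch points east iff $i\equiv\eta_k\pmod 2$. Hence every vertex has degree two, so the whole pattern is a disjoint union of strands and loops, and along any loop the edges strictly alternate between horizontal and vertical unit stitches. In particular the loop turns $90^\circ$ at every vertex and all of its edges have unit length. Writing $a,b$ for the minimum and maximum longitudes attained by the loop $L$, its width is $b-a$, and by the evident symmetry of the construction under interchanging the roles of rows and columns it suffices to prove that $b-a$ is odd, i.e.\ that $a\not\equiv b\pmod 2$; the same argument applied to latitudes then handles the height.

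Before attacking the parity directly, I would record a cheap reduction coming from Proposition~\ref{prop:cross-section}. That result says each column cross-section of $L$ contains an odd number of boxes; since an odd count is nonzero, the polyomino bounded by $L$ meets every one of the $b-a$ columns between longitudes $a$ and $b$, and its area is a sum of $b-a$ odd numbers, so $\mathrm{area}\equiv b-a\pmod 2$. Running the same count over rows gives $\mathrm{area}\equiv(\text{height})\pmod 2$, and therefore the width and height already have the same parity. Thus it is enough to establish a single anchoring parity statement, for instance that the width is odd (equivalently, that the enclosed area is odd).

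To get at the width, I would encode a traversal of $L$ as a closed walk. Reading off only the $x$-coordinate $x_0,x_1,x_2,\dots$ of the successive vertices, the value changes only on horizontal edges and then by exactly $\pm 1$, so this produces a closed $\pm 1$ walk $W$ that returns to its starting value, whose range $\max W-\min W$ equals $b-a$. The direction of each step is forced by the labels: from a vertex lying in column $x$ and row $j$, the loop steps east precisely when $x\equiv\eta_j\pmod 2$. Feeding this rule into the extreme values of $W$ shows that whenever $W$ sits at its minimum $a$ the ambient row $j$ satisfies $\eta_j\equiv a$, while at its maximum $b$ the ambient row satisfies $\eta_j\equiv b+1$. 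These are genuine constraints, but they involve different rows, so they do not by themselves compare the parities of $a$ and $b$.

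The hard part, as these last remarks suggest, is precisely this global parity: converting the local step rule into the statement $a\not\equiv b\pmod 2$ requires knowing that $L$ is a single closed curve rather than an arbitrary collection of $\pm 1$ bridges, and this is where the delicate combinatorics enters. I see two routes. The first is to set up Pete's encoding of a hitomezashi loop as a linked pair of Dyck-type paths, one recording the horizontal excursions and one the vertical excursions, and to read the parity of the range off a first-return decomposition of the path, where an excursion is forced to have odd horizontal extent. The second, more in the spirit of the present paper, is to argue by induction after slicing off the leftmost column (the column of ``left-bracket'' features at longitude $a$), tracking how the extreme longitudes and their parities change under the operation. In either approach the main obstacle is the same: one must verify that the curve-level object obtained after the decomposition or slice is again a legitimate hitomezashi loop (or a tightly controlled union of loops) so that the parity bookkeeping closes up, and it is exactly this topological and combinatorial control that the paper's slicing machinery is designed to supply.
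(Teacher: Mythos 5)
There is a genuine gap: you never actually establish the parity statement. After setting up the degree-two local structure and the alternation of stitches, your argument arrives at what you yourself call ``the hard part'' --- showing $a\not\equiv b\pmod 2$ for the extreme longitudes --- and at that point you only \emph{describe} two candidate strategies (Pete's Dyck-path encoding, or an induction via slicing off the leftmost column) and explicitly defer the key verification to machinery you do not develop. A plan with two possible routes is not a proof; all of the content of the theorem lives in exactly the step you skip. For comparison, the paper does not reprove this result either: it quotes Pete's Theorem~\ref{thm:pete}, the bijection $L\mapsto (P_{\ver}(L),P_{\hor}(L))$ from loops (mod translation) to pairs of Dyck paths of equal height with semilengths $(\width(L)-1)/2$ and $(\height(L)-1)/2$, from which oddness of width and height is immediate because semilengths are integers. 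So your first proposed route is the intended one, but you would still have to construct and justify that bijection (or else carry out the slicing induction, which requires the full strength of Proposition~\ref{prop:outdent-deletion-split} and Lemma~\ref{lem:5am}).

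Your intermediate reduction is also unsound as written. You claim that Proposition~\ref{prop:cross-section} makes each column of cells contribute an odd number of interior boxes, so that $\area(L)\equiv\width(L)\pmod 2$ and hence width and height have the same parity. But that proposition only says that each maximal \emph{run} of interior cells between two consecutive stitches at a fixed longitude has odd length. If $L$ has $r$ horizontal stitches at a given longitude, the column contains $r/2$ such runs, so its total is congruent to $r/2$ modulo $2$; when $r\equiv 0\pmod 4$ the column contains an \emph{even} number of interior boxes. Such longitudes do occur (under Pete's correspondence, $r/2$ is the height of the path $P'_{\ver}(L)$ after the corresponding prefix, which takes even values for any loop whose Dyck path reaches height $2$, e.g.\ the loop in Figure~\ref{FigHito3}). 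So even the weaker claim that $\width(L)$ and $\height(L)$ share a parity is not established by your argument.
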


Rather than reprove Pete's results, we will simply describe his work in Section~\ref{sec:pete}.


A simple closed curve $C$ partitions the plane into a bounded region called the \dfn{interior} of $C$ and an unbounded region called the \dfn{exterior} of $C$. We write $\overline C$ for the union of $C$ with its interior. We write $\area(A)$ for the area of a region $A$ in the plane. Given a hitomezashi loop $L$, we let $\length(L)$ denote its \dfn{length} (i.e., number of stitches) and let $\area(L)$ denote the area of its interior; by slight abuse of terminology, we will also refer to $\area(L)$ as the \dfn{area} of $L$ itself.  It is natural to ask about the possible lengths and areas of hitomezashi loops.  Our next two theorems completely answer these questions.

\begin{theorem}\label{thm:possible-length}
Every hitomezashi loop has length congruent to $4$ modulo $8$.
\end{theorem}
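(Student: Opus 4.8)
The plan is to first pin down the length modulo $4$ using the basic ``corner'' structure of hitomezashi patterns, and then to obtain the finer information modulo $8$ by combining a turning-number computation with an inductive slicing argument.

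First I would record the fundamental local fact: at every lattice point $(i,j)$, exactly one of the two candidate vertical stitches (above and below) and exactly one of the two candidate horizontal stitches (left and right) is present, since $j$ and $j-1$ have opposite parities (and likewise $i$ and $i-1$). Thus every vertex lying on stitches has exactly one incident vertical and one incident horizontal stitch, so a hitomezashi loop $L$ is a simple closed curve whose stitches alternate horizontal, vertical, horizontal, vertical, and so on. Writing $\length(L)=\ell$, it follows that $L$ has exactly $\ell/2$ horizontal and $\ell/2$ vertical stitches, and since the net horizontal displacement around a closed curve is $0$, the east-pointing and west-pointing stitches are equinumerous. Hence $\ell/2$ is even and $\ell = 4\cdot(\#\text{east stitches})$, so $\ell\equiv 0\pmod 4$ and, crucially, $\ell\equiv 4\pmod 8$ \emph{if and only if} the number of east-pointing stitches is odd. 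This reduces the theorem to a single parity statement.

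Next I would analyze the turning of $L$. Orient $L$ and list its vertices cyclically as $v_0,\dots,v_{\ell-1}$ with $v_k=(i_k,j_k)$. At $v_k$ the curve makes a left or right turn; set $t_k=+1$ for a left turn and $t_k=-1$ for a right turn, so that $\sum_k t_k=\pm 4$ by the total-turning theorem for simple closed curves. A short case analysis of the four corner types, combined with the fact that the stitch entering $v_k$ alternates between horizontal and vertical, shows that $t_k=(-1)^{h_k}(-1)^{a_k}(-1)^{b_k}$, where $h_k\in\{0,1\}$ records whether the stitch entering $v_k$ is horizontal (so $h_k$ alternates with $k$), $a_k=(j_k-\epsilon_{i_k})\bmod 2$ records whether the vertical stitch at $v_k$ points up or down, and $b_k=(i_k-\eta_{j_k})\bmod 2$ records whether the horizontal stitch points right or left. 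The key simplification is that each stitch changes exactly one coordinate by $1$, so $i_k+j_k$ flips parity at every step; hence $(-1)^{h_k}(-1)^{i_k+j_k}$ is constant and the formula collapses to
\[
t_k=\kappa\,(-1)^{\epsilon_{i_k}+\eta_{j_k}},\qquad \kappa\in\{\pm 1\}\ \text{fixed}.
\]
Summing gives $\sum_{k}(-1)^{\epsilon_{i_k}+\eta_{j_k}}=\pm 4$; equivalently, among the vertices of $L$ the number with $\epsilon_{i_k}=\eta_{j_k}$ exceeds the number with $\epsilon_{i_k}\neq\eta_{j_k}$ by exactly $\pm 4$. This cleanly re-proves $\ell\equiv 0\pmod 4$, but on its own it controls only the first moment of the sign pattern and does not determine $\ell$ modulo $8$.

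To extract the last factor of $2$ I would set up an induction using the slicing operations. Since every loop has odd height by Theorem~\ref{ThmOdd}, and since the corner property forbids any straight run of two collinear stitches, a loop of height $1$ is forced to be a single unit square, with $\length=4\equiv 4\pmod 8$; this is the base case. For the inductive step I would slice $L$ along a horizontal line through its interior, producing a hitomezashi loop of height $h-2$, and prove that this operation changes the length by a multiple of $8$. The main obstacle lies precisely here: one must show that the sliced object is again a genuine hitomezashi loop (the delicate point being that excising a strip can split or reconnect the curve, and the parities of the surviving grid lines must remain globally consistent), and one must track the stitches created and destroyed near the slice carefully enough to see that their net contribution to the length is $\equiv 0\pmod 8$ rather than merely $\equiv 0\pmod 4$. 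I expect the bookkeeping for how stitches are rerouted along the slice---and the verification that the slice interacts correctly with the sign pattern $(-1)^{\epsilon_{i}+\eta_{j}}$ from the turning computation---to be the hard technical core, after which $\length(L)\equiv 4\pmod 8$ follows by induction on the height.
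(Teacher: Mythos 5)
Your preliminary reductions are correct but do not carry any of the mod-$8$ content. The observation that $\length(L)=4E$ where $E$ is the number of east-pointing stitches, hence $\length(L)\equiv 4\pmod 8$ if and only if $E$ is odd, is a valid reformulation; and the turning-number identity $\sum_k(-1)^{\epsilon_{i_k}+\eta_{j_k}}=\pm 4$ is plausible and checks out on examples. But, as you concede, these only recover $\length(L)\equiv 0\pmod 4$, and the entire theorem is then deferred to the inductive slicing step, which you do not carry out. That step is not a routine verification: it is where all of the difficulty of the theorem lives, and the outline you give for it rests on a false premise.

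Specifically, you assert that slicing $L$ along a horizontal line through its interior produces ``a hitomezashi loop of height $h-2$'' whose length differs from $\length(L)$ by a multiple of $8$. Neither clause is correct. First, slicing is only well-defined when the two deleted grid lines carry the same label, which is why one must slice at an outdent (or indent) latitude rather than an arbitrary interior one, and the existence of an outdent latitude is itself something to prove (Lemma~\ref{lem:5am}). Second, and more seriously, slicing at an outdent latitude does not send $L$ to a single loop: by Lemmas~\ref{prop:outdent-deletion-splice} and~\ref{lem:splitting} and Proposition~\ref{prop:outdent-deletion-split}, the loop can split into several post-slice components and can also merge with other loops contained in its interior, so ``the resulting loop'' is not well-defined and one must track the whole intertwining equivalence class $\{L_1,\dots,L_q\}$ and all post-slice components $M_1,\dots,M_r$ simultaneously. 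Third, the net length change is \emph{not} a multiple of $8$: the correct accounting, equation~\eqref{eq:length}, is $\sum_i\length(L_i)=\sum_j\length(M_j)+4a+4b$ with $a\equiv q-r\pmod 2$, so each loop created or absorbed shifts the total by $4$ modulo $8$, and the induction closes only because these $4$'s cancel against the $4(q-1)$ and $4(r-1)$ contributed by the extra loops themselves. So the ``hard technical core'' you point to is not bookkeeping to be filled in along the lines you describe; the structure of the induction has to be different from the one you propose, and as written the proof is incomplete.
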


\begin{theorem}\label{thm:possible-area}
Every hitomezashi loop has area congruent to $1$ modulo $4$.
\end{theorem}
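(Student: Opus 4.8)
The plan is to first convert the area congruence into a statement about lattice points, and then establish that statement by an induction that mirrors the proof of Theorem~\ref{thm:possible-length}. Begin by recording a structural fact about loops. Since a vertical stitch of longitude $i$ occupies latitude $j+1/2$ only when $j\equiv\epsilon_i\pmod 2$, the vertical stitches of a fixed longitude never share an endpoint; the same holds for horizontal stitches. Hence a hitomezashi loop can never contain two consecutive collinear stitches, so it is a rectilinear simple closed curve that makes a $90^\circ$ turn at each of its vertices. Consequently every boundary lattice point of a loop $L$ is a vertex and every edge is a single unit segment, so the number $B$ of boundary lattice points equals $\length(L)$. Writing $I$ for the number of lattice points in the interior of $L$, Pick's theorem gives
\[\area(L)=I+\frac{\length(L)}{2}-1.\]
By Theorem~\ref{thm:possible-length} we have $\length(L)/2\equiv 2\pmod 4$, so $\area(L)\equiv I+1\pmod 4$. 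Thus Theorem~\ref{thm:possible-area} is equivalent to the assertion that the interior of every hitomezashi loop contains a number of lattice points divisible by $4$.

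I would prove $I\equiv 0\pmod 4$ by strong induction on the width, reusing the vertical slicing operation $\vslice$ that is developed for Theorem~\ref{thm:possible-length}. The base case is the unit square, which is the only loop of width $1$ (anything taller inside a single column would force collinear vertical stitches) and has $I=0$. For a loop of width at least $3$, applying $\vslice$ deletes a vertical strip and reglues the two sides, producing one or more hitomezashi loops of strictly smaller width; since slicing always decreases the width and terminates at unit squares, it suffices to show that a single slice changes the total number of interior lattice points by a multiple of $4$, that is, $I(L)\equiv I(L')\pmod 4$ (equivalently $\area(L)\equiv\area(L')\pmod 4$), summed over the resulting loops.

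The main work, and the step I expect to be the principal obstacle, is controlling this change exactly. The subtlety is that $\vslice$ does not merely erase the boxes lying in the deleted strip: once the strip is removed, the two halves of the interior are reglued along grid lines that may sit at different latitudes, so boxes and interior lattice points can be created or destroyed along the seam, and the loop may a priori pinch or split. To manage this I would lean on Proposition~\ref{prop:cross-section}, which guarantees that each vertical cross-section of $L$ meets the interior in an \emph{odd} number of boxes; this constraint forces the columns bordering the slice to contribute in canceling pairs. The goal is to sort the boxes (and interior lattice points) gained and lost by the regluing into blocks whose total size is even, and then to upgrade ``even'' to ``divisible by $4$'' by combining the odd-cross-section count with the turn-at-every-vertex structure, checking this uniformly across the topological cases that arise as the slice passes through the loop. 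Establishing this divisibility-by-four bookkeeping is the crux; once it is in place the induction closes and Theorem~\ref{thm:possible-area} follows.
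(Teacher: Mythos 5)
Your reduction via Pick's theorem is exactly the paper's first move: both arguments write $\area(L)=I+\length(L)/2-1$, use Theorem~\ref{thm:possible-length} to get $\length(L)/2\equiv 2\pmod 4$, and thereby reduce the problem to showing that the number $I$ of interior lattice points satisfies $I\equiv 0\pmod 4$. Up to that point your argument is correct (including the observation that the loop turns at every vertex, so the boundary lattice points number exactly $\length(L)$).

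The gap is in the second half: you never actually prove $I\equiv 0\pmod 4$. You propose a fresh induction on width via $\vslice$ and explicitly defer the decisive step (``establishing this divisibility-by-four bookkeeping is the crux''), so the proof is incomplete as written. Moreover, the plan you sketch faces real complications you do not address: slicing can merge $L$ with the other loops in its intertwining equivalence class, so the accounting of created and destroyed interior points must be done jointly over $L_1,\dots,L_q$ and their post-slice components, not for $L$ alone, and parity of cross-sections alone will not upgrade ``even'' to ``divisible by~$4$.'' None of this machinery is needed. The missing idea is that $I\equiv 0\pmod 4$ follows directly from Theorem~\ref{thm:possible-length} itself: by Proposition~\ref{prop:curves} every lattice point lies on a unique strand, and a strand through a point interior to $L$ cannot cross $L$, so it is trapped in the interior and is therefore a hitomezashi loop contained in $\overline{L}$. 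Hence the interior lattice points are partitioned among finitely many hitomezashi loops, each contributing $\length\equiv 4\pmod 8$ points --- in particular a multiple of $4$ --- so $I\equiv 0\pmod 4$ and the theorem follows with no further induction.
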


\begin{remark}
Suppose $m,n\geq 3$ are odd integers, and consider the $m\times n$ hitomezashi pattern given by $\epsilon_0=\epsilon_n=\eta_0=\eta_m=1$ and all other grid labels equal to $0$. Let $R_{m,n}$ denote the rug-shaped loop in this pattern that touches the boundary of the grid (see Figure~\ref{FigHito16}). We also define $R_{1,1}$ to be the square-shaped hitomezashi loop with $4$ stitches. Then $\width(R_{m,n})=n$, $\height(R_{m,n})=m$, $\length(R_{m,n})=4m+4n-12$ (except for $\length(R_{1,1})=4$), and $\area(R_{m,n})=mn-m-n+2$.  These examples show that all of the widths, heights, lengths, and areas not ruled out by Theorems~\ref{ThmOdd}--\ref{thm:possible-area} actually occur.
\end{remark}

\begin{figure}[ht]
 \centering
 \includegraphics[height=2.94cm]{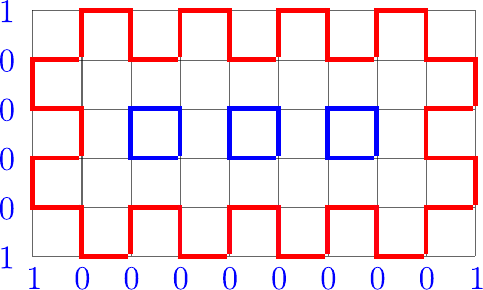}
  \caption{The rug-shaped hitomezashi loop $R_{5,9}$ (in red).}\label{FigHito16}
\end{figure}

We were surprised to discover that although Theorem~\ref{ThmOdd} has a short proof, the structural properties described in Theorems~\ref{thm:possible-length} and~\ref{thm:possible-area} are unexpectedly nontrivial.  Our proof method uses delicate inductive arguments that employ both topological and combinatorial ideas.  In Section~\ref{sec:slice}, we will explore a natural operation on hitomezashi patterns called \emph{slicing}.  Then, in Section~\ref{sec:length-area}, we will use slicing to prove Theorems~\ref{thm:possible-length} and~\ref{thm:possible-area}.

\begin{figure}[ht]
 \centering
 \includegraphics[height=4.55cm]{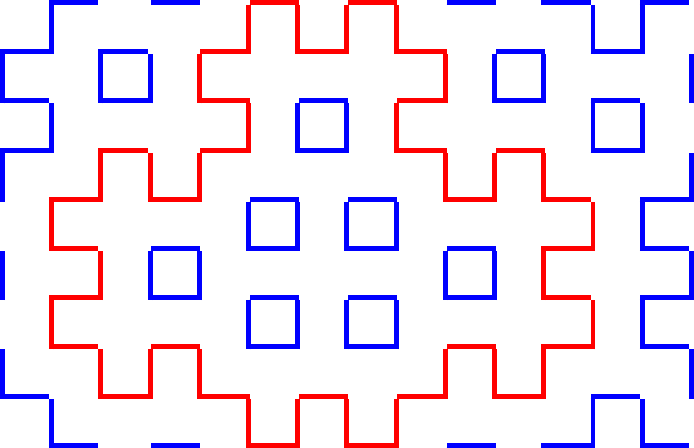}
  \caption{A $9 \times 14$ hitomezashi pattern with several loops, one of which is colored red.}\label{FigHito3}
\end{figure}

\begin{example}
We illustrate the preceding three theorems with the red hitomezashi loop $L$ shown in Figure~\ref{FigHito3}. The width and height of $L$ are $11$ and $9$, which are both odd. We have $\length(L)=60\equiv 4\pmod 8$ and $\area(L)=57\equiv 1\pmod 4$.
\end{example}

The stitches within an $m\times n$ hitomezashi pattern divide the $m\times n$ grid into regions; these regions are colored in pink and white in Figure~\ref{FigHito4}. Let $\Regions(H)$ denote the number of regions of an $m \times n$ hitomezashi pattern $H$. In the \emph{Numberphile} video \cite{Numberphile}, MacDonald suggests studying what happens when the $0/1$-labels of the horizontal and vertical grid lines are chosen randomly. Let ${\mathcal H}_{p}(m,n)$ denote the random $m\times n$ hitomezashi pattern generated by choosing all the labels independently at random so that each label is $0$ with probability $p$. For the sake of simplicity, we focus on the case where $p=1/2$. It is natural to ask for $\mathbb E[\Regions({\mathcal H}_{1/2}(m,n))]$, the expected number of regions that appear in this random model. Our next theorem addresses this. In order to state it, we define ${\bf L}$ to be the set of all hitomezashi loops modulo translation. 

\begin{theorem}\label{thm:expected-regions}
As $\min\{m,n\}\to\infty$, the expected number of regions in $\mathcal H_{1/2}(m,n)$ satisfies \[\mathbb E[\Regions({\mathcal H}_{1/2}(m,n))]=\left(\frac{\pi^2-9}{12}+o(1)\right)mn.\] 
\end{theorem}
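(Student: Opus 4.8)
The strategy is to show that, up to an error of lower order, the number of regions equals the number of hitomezashi loops, and then to compute the expected number of loops by linearity. The basic structural input is that in the infinite pattern, at every lattice point $(x,y)$ exactly one of the two vertical edges incident to it is a stitch (the one selected by the parity of $y-\epsilon_x$) and exactly one of the two horizontal edges is a stitch (selected by $x-\eta_y$); hence every interior vertex of an $m\times n$ pattern has degree $2$, and the stitches form a disjoint union of loops together with arcs whose endpoints lie on the grid boundary. I would then build up the arrangement consisting of the rectangle boundary together with all stitches by adding the boundary first (bounding one region), then the arcs, then the loops. Since arcs and loops are pairwise vertex-disjoint and planar, each arc splits a single face into two and each loop cuts out one new face, so Euler's formula gives $\Regions(H)=1+A+L$ up to a correction arising only from arcs and loops that touch the boundary, where $A$ is the number of arcs and $L$ the number of loops. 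Because arc endpoints and boundary-incident loops all live on the $O(m+n)$ boundary vertices, both $A$ and the total correction are $O(m+n)$, yielding $\mathbb E[\Regions(\mathcal H_{1/2}(m,n))]=\mathbb E[L]+O(m+n)$.

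Next I would compute $\mathbb E[L]$ by summing over loop shapes. Fix $\ell\in{\bf L}$ with $\width(\ell)=w$ and $\height(\ell)=h$, both odd by Theorem~\ref{ThmOdd}. Since the $x$-coordinate changes by $\pm1$ as one traverses $\ell$, the loop has a vertex on each of the $w+1$ vertical grid lines it spans, and the vertical stitch at that vertex forces the label of the line; symmetrically $\ell$ forces the labels of the $h+1$ horizontal lines it spans. The degree-$2$ property guarantees that once these $w+h+2$ labels are correct, no further stitch can attach to $\ell$, so at any placement whose vertices are all interior, $\ell$ appears with probability exactly $2^{-(w+h+2)}$. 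Writing $N_{m,n}(\ell)$ for the number of such placements, one has $N_{m,n}(\ell)=(m-h)(n-w)\sim mn$ for each fixed $\ell$, and provided the relevant series converges, dominated convergence gives
\[\frac{\mathbb E[L]}{mn}\longrightarrow \rho:=\sum_{\ell\in{\bf L}}2^{-(\width(\ell)+\height(\ell)+2)}.\]
Combining with the previous paragraph, $\mathbb E[\Regions(\mathcal H_{1/2}(m,n))]=(\rho+o(1))mn$, so the theorem reduces to the single identity $\rho=\tfrac{\pi^2-9}{12}$.

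The heart of the argument, and the step I expect to be hardest, is evaluating $\rho$ in closed form. Here I would invoke Pete's description \cite{Pete} of hitomezashi loops via pairs of Dyck paths. Traversing a loop one strictly alternates horizontal and vertical unit steps, so every loop carries equally many horizontal and vertical stitches; its horizontal and vertical ``profiles'' are each Dyck-type paths, but they are coupled through this common step count. Consequently the bivariate generating function $\sum_{\ell}x^{\width(\ell)}y^{\height(\ell)}$ is a diagonal (Hadamard-type) combination of two Catalan generating functions rather than a product, and specializing to $x=y=1/2$ produces a series in central binomial coefficients. Summing this series is the crux: the emergence of $\pi^2$ is precisely what one expects from the classical evaluations of $\sum_n 1/(n^2\binom{2n}{n})$ (equivalently, from the power series of $(\arcsin)^2$), and a careful bookkeeping of the loop enumeration by $(w,h)$ should collapse it to $\tfrac{\pi^2-9}{12}=\sum_{k\ge3}(-1)^{k-1}/k^2$. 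Pinning down the exact count $T(w,h)=\#\{\ell\in{\bf L}:\width(\ell)=w,\ \height(\ell)=h\}$ coming from the Dyck-path correspondence, and extracting from it both the convergence estimate used above and the closed-form value of $\rho$, is where the genuine difficulty of this theorem resides.
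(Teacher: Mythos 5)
Your first two steps—reducing $\Regions$ to $\Loops$ up to an $O(m+n)$ boundary correction, and computing $\mathbb E[\Loops]$ by linearity of expectation as $(\rho+o(1))mn$ with $\rho=\sum_{\ell\in{\bf L}}2^{-(\width(\ell)+\height(\ell)+2)}$—match the paper's argument (the paper's Lemma~\ref{PropEmn}; it handles convergence of the series more cheaply than you suggest, by feeding the trivial bound $\mathbb E[\Loops]\leq mn$ back into the lower bound of the sandwich, with no Dyck-path asymptotics needed). The genuine gap is the last step: you never actually evaluate $\rho$, and the route you sketch for doing so rests on a misreading of Pete's bijection. It is true that every loop has equally many horizontal and vertical stitches, but this does \emph{not} couple the two Dyck paths through a common semilength: by Theorem~\ref{thm:pete} the paths $P_{\ver}(L)$ and $P_{\hor}(L)$ have semilengths $(w-1)/2$ and $(h-1)/2$, which are independent of one another (the loop in Figure~\ref{FigHito3} has $w=11$, $h=9$), and the actual constraint is that the two Dyck paths have the same \emph{height}. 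So the count is $T(w,h)=\sum_{k}D\bigl(\tfrac{w-1}{2},k\bigr)D\bigl(\tfrac{h-1}{2},k\bigr)$ where $D(n,k)$ counts Dyck paths of semilength $n$ and height $k$, and the generating function is the sum over heights $\sum_k F_k(x)F_k(y)$ of products of height-restricted Catalan generating functions, not a Hadamard-type diagonal of two ordinary Catalan series. Consequently no central binomial coefficients or $\arcsin^2$ series appear.

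The correct evaluation (Theorem~\ref{ThmEmn}) uses Kreweras' closed form $F_k(x)=x^k/(f_k(x)f_{k+1}(x))$ with $f_m(x)=f_{m-1}(x)-xf_{m-2}(x)$, which at $x=1/4$ gives $f_m(1/4)=(m+1)/2^m$ and hence $F_k(1/4)=2/((k+1)(k+2))$; then
\[\rho=\frac{1}{16}\sum_{k\geq 0}F_k(1/4)^2=\frac{1}{4}\sum_{k\geq 1}\frac{1}{k^2(k+1)^2},\]
which a partial-fraction decomposition reduces to $\zeta(2)$ plus a telescoping sum, yielding $\frac{\pi^2-9}{12}$. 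The $\pi^2$ thus enters through the Basel problem applied to the height-restricted counts, not through $\sum_n 1/\bigl(n^2\binom{2n}{n}\bigr)$. Since you yourself flag this evaluation as ``where the genuine difficulty of this theorem resides'' and the mechanism you propose for it would not produce the stated constant, the proof is incomplete at its essential point.
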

We will prove Theorem~\ref{thm:expected-regions} in Section~\ref{sec:counting}.

Finally, in Section~\ref{sec:open}, we will gather several open questions and promising topics for future inquiry.

\section{Basic properties and additional terminology}\label{sec:basic}
Let us start by collecting a few basic facts about structures that can appear in hitomezashi patterns.  First, we wish to justify our discussion of ``curves'' and ``loops.''  Recall that a \emph{hitomezashi pattern} is by convention infinite.

\begin{proposition}\label{prop:curves}
Let $H$ be a hitomezashi pattern.  If $i$ and $j$ are integers, then there are exactly two stitches of $H$---one vertical and one horizontal---with $(i,j)$ as an endpoint.
\end{proposition}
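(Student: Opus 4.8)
The plan is to fix the vertex $(i,j)$ and count separately the vertical stitches and the horizontal stitches that have $(i,j)$ as an endpoint, showing in each case that exactly one is present. The whole argument rests on the parity conventions in the construction, so the key observation is that the two candidate stitches of a given orientation at a vertex have latitudes (respectively longitudes) of opposite parity, so that exactly one of them is drawn.

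First I would handle the vertical stitches. The only vertical stitches that can have $(i,j)$ as an endpoint are the segment joining $(i,j)$ to $(i,j+1)$, which has longitude $i$ and latitude $j+1/2$, and the segment joining $(i,j-1)$ to $(i,j)$, which has longitude $i$ and latitude $j-1/2$. By the drawing rule, the first of these appears precisely when $j\equiv\epsilon_i\pmod 2$, while the second appears precisely when $j-1\equiv\epsilon_i\pmod 2$, i.e.\ when $j\equiv\epsilon_i+1\pmod 2$. Since exactly one of the congruences $j\equiv\epsilon_i\pmod 2$ and $j\equiv\epsilon_i+1\pmod 2$ can hold, exactly one of these two vertical stitches lies in $H$. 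Next I would repeat the identical argument for horizontal stitches, now using the latitude-$j$ label $\eta_j$ in place of $\epsilon_i$: the two candidate horizontal stitches at $(i,j)$ join $(i,j)$ to $(i+1,j)$ and $(i-1,j)$ to $(i,j)$, with longitudes $i+1/2$ and $i-1/2$, and these are drawn when $i\equiv\eta_j\pmod 2$ and $i\equiv\eta_j+1\pmod 2$, respectively. Again exactly one congruence holds, so exactly one horizontal stitch is incident to $(i,j)$. Combining the two counts gives exactly two stitches at the vertex, one of each orientation.

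There is essentially no serious obstacle here: the statement is a direct consequence of the parity definitions, and the only point requiring care is correctly translating the rule ``draw the segment of latitude $j+1/2$ when $j\equiv\epsilon_i\pmod 2$'' into the parity condition attached to each of the two candidate segments meeting at a vertex. It is worth emphasizing that this is precisely the fact that makes the stitches link up: every vertex of $\mathbb Z^2$ has degree exactly $2$ in the configuration, so the stitches assemble into disjoint simple curves, which is what justifies the subsequent discussion of \emph{strands} and \emph{loops}.
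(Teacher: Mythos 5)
Your proof is correct and is exactly the argument the paper gives (in abbreviated form right after the proposition): of the two candidate stitches of each orientation at $(i,j)$, the parity rule selects exactly one. No issues.
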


To see this proposition, note that of the two potential horizontal stitches with endpoint $(i,j)$, exactly one is present; the same goes for the vertical stitches.  (In the same way, for an $m \times n$ hitomezashi pattern, every non-boundary lattice point has exactly two stitches passing through it, and every boundary lattice point has at most two stitches passing through it.)  This proposition tells us that in a hitomezashi pattern, we can trace our way unambiguously from one stitch to the next without ever reaching a ``branch point'' where we have to make a decision, and we alternately pass through horizontal and vertical stitches.  We define a \dfn{hitomezashi path} to be a curve (not necessarily maximal) formed from stitches in a hitomezashi pattern.  We obtain a loop if we eventually return to our starting point.  (So a hitomezashi loop is just a closed hitomezashi path.)  If we never return to our starting point, then our path continues indefinitely in both directions.  We use the term \dfn{strand} to refer to a maximal path (that is, either a loop or a bi-infinite path) in a hitomezashi pattern.  Note that every stitch in a hitomezashi pattern is contained in a unique strand. 

\begin{remark}
The strands of a hitomezashi pattern partition the plane into a (possibly infinite) number of regions that border each other along the various strands.  These regions can be $2$-colored so that neighboring regions receive different colors, and this $2$-coloring is unique up to exchanging the colors. Figure~\ref{FigHito4} shows such a $2$-coloring for a $50\times 50$ hitomezashi pattern.
\end{remark}

Our next observation is that the alternation of stitches at each latitude and longitude imposes parity restrictions on hitomezashi patterns.  We can give an \dfn{orientation} to a hitomezashi path by choosing a direction in which to traverse it. We often choose to orient a hitomezashi loop counterclockwise so that the interior is always on our left as we traverse it. 

\begin{lemma}[Parity argument]\label{Lem:Parity}
Let $P$ be an oriented hitomezashi path.  If $P$ has at least one horizontal stitch at latitude $y$,  then all of the stitches of $P$ at latitude $y$ are oriented in the same direction (west-to-east or east-to-west).  The analogous statement holds for vertical stitches (at a fixed longitude).
\end{lemma}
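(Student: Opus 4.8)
The plan is to exploit the defining alternation property of hitomezashi stitches along a fixed horizontal grid line.  Fix a latitude $y$ and suppose $P$ contains at least one horizontal stitch there.  By construction, all horizontal stitches at latitude $y=j$ have longitudes $i+1/2$ with $i\equiv\eta_j\pmod 2$ of a fixed parity; equivalently, the present horizontal stitches at this latitude occupy exactly the ``even-gap'' positions, so consecutive horizontal stitches along the line $y=j$ are separated by gaps of length $2$.  The key observation is that the direction in which $P$ traverses a horizontal stitch at latitude $y$ is determined by the parity of its longitude together with a global orientation convention, and I want to show this direction is constant across all such stitches of $P$.

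The main step is a local argument tracking how the orientation propagates as we move along $P$ from one horizontal stitch at latitude $y$ to the next.  First I would invoke Proposition~\ref{prop:curves}: at each endpoint of a horizontal stitch, the path continues along a unique vertical stitch, so $P$ alternates between horizontal and vertical stitches, and there are no branch points.  Starting from one horizontal stitch of $P$ at latitude $y$ oriented, say, west-to-east, I would follow $P$ until it next returns to latitude $y$ along a horizontal stitch.  Between these two horizontal stitches, $P$ must travel through an even number of vertical stitches: every time $P$ leaves latitude $y$ it does so via a vertical stitch going north or south, and to return to the same latitude $y$ it must make a net vertical displacement of zero, hence traverse equally many north-going and south-going vertical stitches.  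I would then argue that this parity of vertical stitches, combined with the fixed parity of the allowed longitudes at latitude $y$ (which forces the horizontal displacement between consecutive horizontal stitches of $P$ to be even), pins down the orientation of the next horizontal stitch to agree with the first.

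Concretely, I expect to set up a parity invariant: as we traverse $P$, assign to each horizontal stitch at latitude $y$ the quantity ``longitude parity plus orientation bit,'' and show this is preserved along $P$ by checking the two elementary moves (turning via a single vertical stitch, or threading through a longer vertical excursion).  The alternation constraint guarantees that any return to latitude $y$ lands on a longitude of the correct parity, and the net-zero vertical displacement controls the orientation bit, so the invariant forces all horizontal stitches of $P$ at latitude $y$ to share an orientation.  The vertical case follows by the symmetric argument, swapping the roles of longitude and latitude.

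The hard part will be making the orientation-propagation bookkeeping rigorous and genuinely local, rather than hand-waving about ``net vertical displacement.''  In particular I must handle the case where the excursion between two consecutive horizontal stitches at latitude $y$ dips both above and below the line $y$, not merely to one side; the parity argument must account for crossings of latitude $y$ that occur strictly between the two horizontal stitches in question.  I anticipate that the cleanest way around this is to note that each such crossing uses a vertical stitch, and the alternation of vertical stitches at a fixed longitude forces these crossings to pair up with the correct parity, so that the cumulative orientation bit is governed solely by the parity of the longitude difference, which the hitomezashi construction fixes to be even.
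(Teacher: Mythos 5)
Your proposal is correct and takes essentially the same route as the paper: both arguments rest on the alternation of horizontal and vertical stitches along $P$ together with the fixed parity of the longitudes of the horizontal stitches present at latitude $y$. The paper packages the bookkeeping more cleanly by tracking the parity of $x+y$ at the initial endpoint of each stitch (an even number of unit steps preserves it), which subsumes your separate counts of vertical stitches and horizontal displacement and renders moot your concern about excursions that cross latitude $y$.
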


\begin{proof}
We give the argument for horizontal stitches since the vertical stitch case is identical.  Let $(x,y)$ be the coordinates of the initial end of some horizontal stitch $s$ at latitude $y$.  In order to get to the initial end $(x', y')$ of another horizontal stitch $s'$ in $P$, we have to traverse an even number of edges, since we alternate between passing through horizontal and vertical stitches.  This implies that $x+y$ and $x'+y'$ have the same parity.  If $s'$ also has latitude $y$, then $y=y'$, so $x$ and $x'$ have the same parity.  This means that the horizontal stitches through $(x,y)$ and $(x',y')$ either both extend to the west or both extend to the east, as we wanted to show.
\end{proof}

As promised in the introduction, we can use this lemma to show that hitomezashi loops have odd cross-sections.  We say that two stitches of $L$ with longitude $a$ are \dfn{consecutive} if there is no other stitch of $L$ with longitude $a$ and latitude between the latitudes of our chosen stitches.

\begin{proposition}\label{prop:cross-section}
Let $L$ be a hitomezashi loop.  Let $a$ be a longitude at which $L$ has at least one horizontal stitch.  Then the difference in latitude between any two consecutive stitches of $L$ at longitude $a$ is an odd integer.  Moreover, the difference in latitude between the north-most and south-most stitches of $L$ at longitude $a$ is an odd integer.  The analogous statements hold for vertical stitches.
\end{proposition}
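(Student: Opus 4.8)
The plan is to fix the longitude $a$ and analyze how the vertical line $\ell$ of longitude $a$ meets $L$, combining a topological fact (from the Jordan curve theorem) with the parity bookkeeping behind Lemma~\ref{Lem:Parity}. First I would observe that, since only horizontal stitches have half-integer longitude, we must have $a=i+\tfrac12$ for some integer $i$, and \emph{every} stitch of $L$ at longitude $a$ is then horizontal and sits at an integer latitude. The line $\ell\colon x=a$ therefore avoids all lattice points and meets $L$ transversally, precisely at the midpoints $(a,j)$ of these horizontal stitches. I would orient $L$ counterclockwise, so that the interior of $L$ lies to the left as we traverse it.

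The topological input comes next. By the Jordan curve theorem, $\ell$ crosses $L$ an even number of times, and as we travel north along $\ell$ the crossings alternate between \emph{entering} and \emph{exiting} the interior of $L$ (starting with an entering crossing at the south-most stitch and ending with an exiting crossing at the north-most, since $\ell$ lies in the exterior far to the south and far to the north). Because $L$ is oriented counterclockwise, at a crossing the interior lies just to the north of the stitch exactly when the stitch is oriented west-to-east, and just to the south exactly when it is oriented east-to-west; hence an entering crossing is an eastward stitch and an exiting crossing is a westward stitch. Writing the crossing latitudes as $j_1<\cdots<j_{2t}$, this shows that the orientations alternate eastward, westward, eastward, $\dots$, so in particular any two consecutive stitches, and also the south-most stitch (at $j_1$) together with the north-most stitch (at $j_{2t}$), are oriented oppositely.

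For the parity input I would reuse the observation from the proof of Lemma~\ref{Lem:Parity}: the initial ends of any two horizontal stitches of $L$ have coordinate sums of the same parity, since an even number of stitches separates them along $L$. A horizontal stitch of longitude $a=i+\tfrac12$ at latitude $j$ has initial end $(i,j)$ if it is oriented eastward and $(i+1,j)$ if it is oriented westward, so its coordinate sum is $i+j$ or $i+1+j$, respectively. Applying the equal-parity fact to two \emph{oppositely} oriented such stitches, at latitudes $j$ and $j'$, gives $i+j\equiv i+1+j'\pmod 2$, i.e.\ $j-j'$ is odd. Feeding in the consecutive pairs (oppositely oriented by the previous paragraph) yields the first claim, and feeding in $j_1$ and $j_{2t}$ yields the ``moreover'' statement. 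The assertion for vertical stitches then follows by interchanging the roles of the two coordinate axes.

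I expect the main obstacle to be the topological step: pinning down that the crossing directions genuinely alternate, and correctly matching ``entering/exiting'' with ``eastward/westward'' through the counterclockwise orientation. Once this alternation is secured, the parity computation is entirely routine.
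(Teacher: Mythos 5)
Your proof is correct and follows essentially the same strategy as the paper's: first establish that consecutive stitches of $L$ at longitude $a$ are oppositely oriented (you via the alternation of entering/exiting crossings of the transversal line $x=a$, the paper via the interior/exterior status of the cells between the two stitches), and then extract the odd latitude difference from the parity bookkeeping underlying Lemma~\ref{Lem:Parity} (you by comparing the coordinate sums of the initial endpoints of the two horizontal stitches directly, the paper by comparing the adjacent vertical stitches). The only substantive variation is in the ``moreover'' clause, where you observe that the south-most and north-most stitches are oppositely oriented as the first and last crossings and reuse the same parity computation, whereas the paper telescopes the consecutive differences using that the number of stitches at longitude $a$ is even---your route is marginally more direct but the content is identical.
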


\begin{proof}
Orient $L$ counterclockwise so that the interior of $L$ always lies to the left of $L$.  We prove only the statement for horizontal stitches.  First, let $s$ and $s'$ be consecutive (horizontal) stitches of $L$ at longitude $a$.  Then $s$ and $s'$ are oriented in different directions because the cells between $s$ and $s'$ are either all in the interior of $L$ or all in the exterior of $L$.  Let $t$ denote the (vertical) stitch of $L$ immediately before $s$, and let $t'$ denote the (vertical) stitch of $L$ immediately after $s'$; note that $\lon(t)=\lon(t')$.  So $\lat(t)-\lat(t')$ is an even integer.  By Lemma~\ref{Lem:Parity}, $t$ and $t'$ are oriented in the same direction, and we conclude that $\lat(s)-\lat(s')$ is an odd integer.

For the second statement of the proposition, let $s_1, \ldots, s_r$ denote the horizontal stitches of $L$ at longitude $a$, ordered from north to south.  Since $L$ is a closed loop, we see that $r$ must be even.  Each $\lon(s_i)-\lon(s_{i+1})$ is an odd integer (by the previous paragraph), so $$\lat(s_1)-\lat(s_r)=(\lat(s_1)-\lat(s_2))+(\lat(s_2)-\lat(s_3))+\cdots+(\lat(s_{r-1})-\lat(s_r))$$ is also an odd integer.
\end{proof}

The next proposition explains many of the ``mirroring'' symmetries that arise in hitomezashi patterns. If a stitch $s$ appears before a stitch $t$ as we traverse an oriented hitomezashi path $P$, then we write $P_{[s,t]}$ for the subpath of $P$ that starts at $s$ and ends at $t$ (including $s$ and $t$). We say an oriented hitomezashi path is \dfn{vertically monotone} (respectively, \dfn{horizontally monotone}) if all its vertical (respectively, horizontal) stitches are oriented in the same direction. 

\begin{lemma}[Mirroring]\label{lem:mirror}
Let $P$ be an oriented hitomezashi path. Let $s,t,t',s'$ be horizontal stitches that appear in this order (up to a cyclic shift if $P$ is a loop) as we traverse $P$, and assume $\lat(s)=\lat(s')$ and $\lat(t)=\lat(t')$. Suppose all vertical stitches in $P_{[s,t]}$ (respectively, $P_{[t',s']}$) are oriented from south to north (respectively, north to south). Then $P_{[t',s']}$ can be obtained by reflecting $P_{[s,t]}$ across a vertical axis (ignoring orientations).  The analogous statement for reflections across a horizontal axis also holds.
\end{lemma}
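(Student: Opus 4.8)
The plan is to realize both subpaths as ``upward'' vertically monotone paths covering exactly the same strip of latitudes, and then to build the reflection one latitude at a time, using Proposition~\ref{prop:curves} to see that each stitch has no choice about where to continue and Lemma~\ref{Lem:Parity} to pin down the horizontal orientations. I will treat the vertical-axis case; the horizontal-axis case is identical with the roles of latitude and longitude (and of horizontal and vertical stitches) interchanged.

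First I would record the coarse structure of the two subpaths. Since a hitomezashi path alternates horizontal and vertical stitches and $s,t$ are both horizontal, $P_{[s,t]}$ consists of horizontal stitches separated by single vertical stitches, all of the latter oriented south-to-north. Each vertical stitch raises the latitude by exactly $1$, so $P_{[s,t]}$ contains exactly one horizontal stitch at each latitude $\lat(s),\lat(s)+1,\dots,\lat(t)$; call it $h_y$. Running the same argument on $P_{[t',s']}$, whose verticals all point north-to-south, and using $\lat(s)=\lat(s')$ and $\lat(t)=\lat(t')$, I find that $P_{[t',s']}$ likewise has exactly one horizontal stitch per latitude in the identical range $[\lat(s),\lat(t)]$; in particular the two subpaths have equal length. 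It is convenient to pass to $Q$, the reversal of $P_{[t',s']}$, whose vertical stitches now point south-to-north and whose horizontal stitch at latitude $y$ I denote $h'_y$, so that $h'_{\lat(s)}=s'$ and $h'_{\lat(t)}=t'$.

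Next I would fix the reflection $R$ across the vertical line $x=\tfrac12(\lon(s)+\lon(s'))$, chosen so that $R(s)=s'$; note that $R$ preserves latitudes, carries south-to-north verticals to south-to-north verticals, and swaps east with west. By Lemma~\ref{Lem:Parity} applied in $P$, at each latitude $y$ all horizontal stitches of $P$ point the same way, so $h_y$ and the latitude-$y$ stitch of $P_{[t',s']}$ share an orientation; since $h'_y$ is the latter reversed and $R(h_y)$ is the former reflected, $R(h_y)$ and $h'_y$ automatically carry the same orientation. It thus remains to show they occupy the same longitude, which I would prove by induction on $y$. The base case $R(h_{\lat(s)})=R(s)=s'=h'_{\lat(s)}$ holds by the choice of $R$. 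For the step, assume $R(h_y)=h'_y$. Knowing the common orientation of these stitches locates their shared terminal endpoint; the upward vertical $v_{y+1}$ leaves $h_y$ from its terminal endpoint, so $R(v_{y+1})$ is the upward unit segment based at the terminal endpoint of $h'_y$. Since $Q$ also leaves $h'_y$ upward from exactly that endpoint, $R(v_{y+1})$ coincides with the stitch of $Q$ joining $h'_y$ to $h'_{y+1}$; in particular it is an honest stitch of $P$ and deposits us at the initial endpoint of $h'_{y+1}$. Now $R(h_{y+1})$ and $h'_{y+1}$ are horizontal stitches sharing this initial endpoint and, as noted, the same orientation, so they are equal. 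This closes the induction and yields $R(P_{[s,t]})=Q=P_{[t',s']}$ as curves.

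The step I expect to be the main obstacle is precisely the heart of the induction: a priori there is no reason for the reflected segment $R(v_{y+1})$ (or $R(h_{y+1})$) to be a stitch of $P$ at all, since reflecting across an arbitrary axis need not preserve $P$. The resolution is never to assume this, but instead to show that each reflected segment is forced to coincide with the unique stitch of $P$ issuing in the correct direction from the relevant lattice point---uniqueness coming from Proposition~\ref{prop:curves} and the ``correct direction'' coming from Lemma~\ref{Lem:Parity}. Keeping careful track of which endpoint is the terminal one (so that the reflection and the reversal land their verticals at the same lattice point) is the only delicate bookkeeping, and it is exactly what makes the two subpaths match in lock-step.
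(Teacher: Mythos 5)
Your proof is correct and follows essentially the same route as the paper's: both reflect across the vertical line $x=\tfrac{1}{2}(\lon(s)+\lon(s'))$ and match the two subpaths stitch by stitch by induction, using Lemma~\ref{Lem:Parity} to align the horizontal orientations and the vertical monotonicity hypothesis to align the vertical ones. Your version merely indexes by latitude rather than by position along the path and spells out the endpoint bookkeeping more explicitly.
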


\begin{proof}
We illustrate the proof in Figure~\ref{FigHito6}.  Because $P_{[s,t]}$ and $P_{[t',s']}$ are both vertically monotone, these paths have the same number of stitches.  Let $s=u_1,u_2,\ldots,u_r=t$ be the stitches in $P_{[s,t]}$ given in the order of the orientation of $P$, and let $t'=v_r,v_{r-1},\ldots,v_1=s'$ be the stitches in $P_{[t',s']}$ given in the order of the orientation of $P$. Note that for each $1\leq i\leq r$, the stitches $u_i$ and $v_i$ have the same latitude. If $i$ is odd, then $u_i$ and $v_i$ are both horizontal, so, by Lemma~\ref{Lem:Parity}, they are oriented in the same direction (west-to-east or east-to-west). Let $a=(\lon(s)+\lon(s'))/2$. We claim that for each $1\leq i\leq r$, the stitch $v_i$ is obtained by reflecting $u_i$ across the line $x=a$ (ignoring orientations). We proceed by induction on $i$, noting that the base case $i=1$ follows immediately from the choice of $a$.  For the induction step, assume that $v_{i-1}$ is obtained by reflecting $u_{i-1}$ across $x=a$.
If $i$ is even, then $u_{i-1}$ and $v_{i-1}$ have the same orientation, and $u_i$ and $v_i$ are oriented south-to-north and north-to-south, respectively. It follows that $v_i$ is obtained by reflecting $u_i$ across $x=a$ in this case. If $i$ is odd, then $u_{i}$ and $v_{i}$ have the same orientation, and $u_{i-1}$ and $v_{i-1}$ are oriented south-to-north and north-to-south, respectively. The desired claim follows in this case as well.  The proof for horizontal reflections is identical.
\end{proof}

\begin{figure}[ht]
 \centering
 \includegraphics[height=5.233cm]{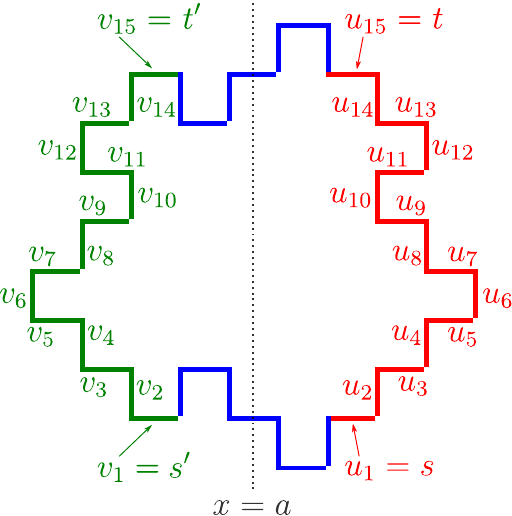}
  \caption{An illustration of the proof of Proposition~\ref{lem:mirror}.}\label{FigHito6}
\end{figure}


A special case of this mirroring phenomenon will be useful later.  Let $s$ be a horizontal stitch in a hitomezashi loop $L$, and let $s_1$ and $s_2$ denote the vertical stitches of $L$ on either side of $s$.  If $s_1$ and $s_2$ have the same latitude and the cell between $s_1$ and $s_2$ lies in the exterior of $L$, then we say that $s$ is an \dfn{indent} of $L$.  If $s_1$ and $s_2$ have the same latitude and the cell between $s_1$ and $s_2$ lies in the interior of $L$, then we say that $s$ is an \dfn{outdent} of $L$.  If $s_1$ and $s_2$ have different latitudes, then we say that $s$ is a \dfn{nondent} of $L$.  We also make the analogous definitions for vertical stitches.  The following corollary is a special case of Proposition~\ref{lem:mirror}.

\begin{corollary}\label{prop:outdents}
Let $L$ be a hitomezashi loop.  If $a$ is a longitude at which $L$ has at least one horizontal stitch, then the stitches of $L$ at longitude $a$ are either all indents, all outdents, or all nondents.  The analogous statement holds for vertical stitches.
\end{corollary}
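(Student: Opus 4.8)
The plan is to prove the sharper statement that the common type of the horizontal stitches at longitude $a$ is completely determined by the orientations of the vertical stitches immediately to the west and to the east, and then to invoke the parity argument to see that those orientations are fixed. Throughout I orient $L$ counterclockwise, so that the interior of $L$ lies immediately to the left of each stitch as we traverse it. Write $a=i+1/2$, so that each horizontal stitch $s$ of $L$ at longitude $a$ runs from $(i,\lat(s))$ to $(i+1,\lat(s))$; its two flanking vertical stitches $s_1$ and $s_2$ then have longitudes $i$ and $i+1$, respectively. By Lemma~\ref{Lem:Parity}, all vertical stitches of $L$ at longitude $i$ share a single orientation $\sigma_1\in\{N,S\}$, and all those at longitude $i+1$ share a single orientation $\sigma_2\in\{N,S\}$. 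The key point is that $\sigma_1$ and $\sigma_2$ do not depend on the particular stitch $s$.

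First I would carry out the local bookkeeping that reads off, from $\sigma_1$, $\sigma_2$, and the orientation of $s$ itself, whether each flanking vertical extends upward or downward from latitude $\lat(s)$. There are two cases according to whether $s$ is traversed west-to-east or east-to-west; in each, knowing which endpoint of $s$ is the head and which is the tail pins down, via $\sigma_1$ and $\sigma_2$, whether $s_1$ and $s_2$ rise or fall from their shared endpoint with $s$. A short computation then shows that $s_1$ and $s_2$ have the same latitude precisely when $\sigma_1\neq\sigma_2$, and crucially this conclusion is the same in both orientation cases. Hence $s$ is a nondent exactly when $\sigma_1=\sigma_2$, independently of the orientation of $s$; in particular, either every horizontal stitch at longitude $a$ is a nondent or none is.

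It then remains to separate indents from outdents in the case $\sigma_1\neq\sigma_2$. Here both flanking verticals extend the same way, so the enclosed unit cell lies directly above $s$ (if both rise) or directly below $s$ (if both fall). Whether this cell is interior or exterior is decided by the counterclockwise convention: the cell to the left of $s$ as we traverse it is interior. Combining this with the bookkeeping from the previous step, I would check the four possibilities and find that $(\sigma_1,\sigma_2)=(N,S)$ always yields an indent while $(\sigma_1,\sigma_2)=(S,N)$ always yields an outdent, again regardless of the traversal direction of $s$. Thus the type of $s$ is a function of the pair $(\sigma_1,\sigma_2)$ alone, which is constant across longitude $a$, and the corollary follows; the analogous statement for vertical stitches is obtained by exchanging the roles of the two axes.

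The main obstacle is precisely this orientation-independence in both bookkeeping steps: an individual horizontal stitch at longitude $a$ may be traversed west-to-east or east-to-west depending on its latitude, so I must verify that the extracted type agrees in both cases rather than tacitly assuming a single traversal direction. Applying the ``left-hand side is interior'' rule in the correct direction for each of the four cases is the only delicate point. This argument is exactly the minimal instance of the mirror symmetry in Lemma~\ref{lem:mirror}: when $s_1$ and $s_2$ have equal latitude, the length-one monotone subpaths on either side of $s$ are reflections of one another across the vertical line $x=a$, and reflection preserves the interior/exterior status of the enclosed cell.
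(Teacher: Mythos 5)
Your argument is correct and is essentially the approach the paper intends: the paper simply cites the Mirroring Lemma (whose minimal instance is exactly the case analysis you carry out), and the underlying engine in both treatments is Lemma~\ref{Lem:Parity} fixing the orientations $(\sigma_1,\sigma_2)$ of the vertical stitches at the two flanking longitudes. Your explicit verification---same latitude iff $\sigma_1\neq\sigma_2$, with $(N,S)$ giving indents and $(S,N)$ giving outdents independently of the traversal direction of $s$---checks out, including the orientation-independence you correctly flag as the only delicate point.
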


In light of this corollary, we may classify the longitudes at which $L$ has horizontal stitches as \dfn{indent longitudes}, \dfn{outdent longitudes}, and \dfn{nondent longitudes}; we also do the same for latitudes.

\section{Pete's interpretation}\label{sec:pete}
We will now describe Pete's interpretation of Hitomezashi patterns in terms of Dyck paths, and we will outline the structural results that he deduces from this perspective.

Let $L$ be a hitomezashi loop, and consider its clockwise orientation.  Lemma~\ref{Lem:Parity} tells us that for each longitude at which $L$ has vertical stitches, all of the stitches of $L$ at this longitude are oriented in the same direction; say that such a longitude is an \dfn{up-longitude} if the stitches are oriented south-to-north, and that it is a \dfn{down-longitude} if the stitches are oriented north-to-south.  We obtain an up-down path $P'_{\ver}(L)$ consisting of $\width(L)+1$ steps by reading off the sequence of up- and down-longitudes from left to right and recording an up step for each down-longitude and a down step for each up-longitude. We then get a path $P_{\ver}(L)$ consisting of $\width(L)-1$ steps by deleting the first and last steps from $P_{\ver}'(L)$.  Likewise, identify each latitude intersecting $L$ as a \dfn{left-latitude} or a \dfn{right-latitude}. We obtain an up-down path $P'_{\hor}(L)$ consisting of $\height(L)+1$ steps by reading off the sequence of left- and right-latitudes from bottom to top and recording an up step for each left-latitude and a down step for each right-latitude. Deleting the first and last steps from $P'_{\hor}(L)$ yields a path $P_{\hor}(L)$.  See Figure~\ref{FigHito18}.  It is clear that the pair $(P_{\ver}(L), P_{\hor}(L))$ uniquely determines $L$; Pete's insight is that the possible arising pairs $(P_{\ver}(L), P_{\hor}(L))$ have a special structure.

\begin{figure}[ht]
 \centering
 \includegraphics[height=7.35cm]{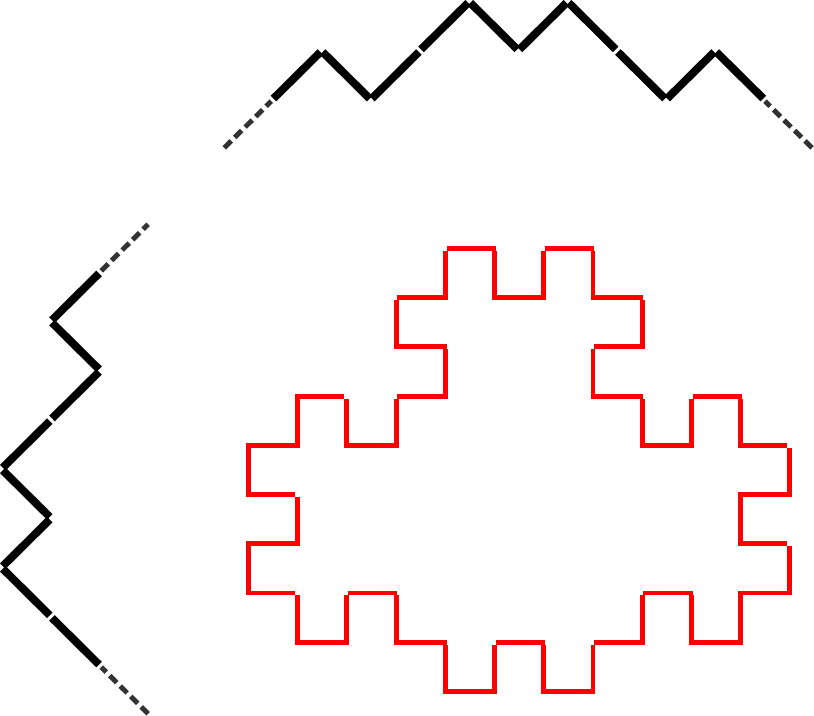}
  \caption{A hitomezashi loop $L$ and its corresponding Dyck paths $P_{\ver}(L)$ (top) and $P_{\hor}(L)$ (left). If we represent up steps by $\text{U}$ and down steps by $\text{D}$, then these paths are $P_{\ver}(L)=\text{UDUUDUDDUD}$ and $P_{\hor}(L)=\text{UUDUDDUD}$. Each of these paths has height $2$.}\label{FigHito18}
\end{figure}

Recall that a \dfn{Dyck path of semilength $n$} is a sequence of $n$ up-steps and $n$-down steps such that, reading from left to right, there are always at least as many up-steps as down-steps.  The \dfn{height} of a Dyck path is the maximum, over all prefixes of the sequence, of the difference between the number of up-steps and the number of down-steps. Recall that we write ${\bf L}$ for the set of hitomezashi loops modulo translation. 

\begin{theorem}[Pete~\cite{Pete}]\label{thm:pete}
The map $L\mapsto (P_{\ver}(L), P_{\hor}(L))$ is a bijection from ${\bf L}$ to the set of pairs of Dyck paths of the same height. Moreover, if $L$ has width $w$ and height $h$, then $P_{\ver}(L)$ and $P_{\hor}(L)$ have semilengths $(w-1)/2$ and $(h-1)/2$, respectively.
\end{theorem}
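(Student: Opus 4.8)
The plan is to break Theorem~\ref{thm:pete} into four parts: (i) the maps $P_\ver,P_\hor$ are well defined and produce \emph{genuine} Dyck paths of the claimed semilengths; (ii) the two Dyck paths have equal height; (iii) injectivity; and (iv) surjectivity. Throughout I fix the clockwise orientation of $L$ and freely use Theorem~\ref{ThmOdd} (so $w,h$ are odd and the semilengths $(w-1)/2,(h-1)/2$ are integers), Lemma~\ref{Lem:Parity} (each longitude and latitude has a well-defined up/down or left/right type, so each step of $P'_\ver,P'_\hor$ is unambiguous), and Corollary~\ref{prop:outdents}.

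First I would pin down the lengths. If $x_{\min}$ and $x_{\max}=x_{\min}+w$ are the extreme longitudes of $L$, then for every integer $a$ with $x_{\min}\le a\le x_{\max}$ the loop $L$ must meet the vertical line $x=a$: since $L$ is a connected curve with points on both sides of this line, the intermediate value theorem forces a point of $L$ on it, and by Proposition~\ref{prop:curves} any lattice point of $L$ on the line carries a vertical stitch of longitude $a$. Hence $L$ has vertical stitches at each of the $w+1$ longitudes in $[x_{\min},x_{\max}]$, so $P'_\ver$ has exactly $w+1$ steps and $P_\ver$ has $w-1$ steps; the horizontal case is identical. It remains to see that $P_\ver$ is balanced and nonnegative. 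For this I would sweep a vertical scan line from left to right, track the number of horizontal stitches of $L$ it crosses, and show that (after the shift caused by deleting the forced extreme steps) the running partial sum of $P_\ver$ is nonnegative because it computes a genuine count of $L$ — a crossing or nesting number — that vanishes beyond the loop; this simultaneously yields the Dyck condition and the balance $\#U=\#D$ that gives semilength $(w-1)/2$. The delicate point is matching the $\pm1$ steps of $P_\ver$ to the increments of this count at each longitude, and this is where the local orientation analysis of Lemma~\ref{Lem:Parity}, the mirroring of Lemma~\ref{lem:mirror}, and the indent/outdent/nondent trichotomy of Corollary~\ref{prop:outdents} do the real work; it is the first genuinely nontrivial step.

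For the equal-height claim I would exhibit a single translation-invariant integer $H(L)$, the maximal \emph{nesting depth} of $L$, and prove that it equals both $\height(P_\ver(L))$ and $\height(P_\hor(L))$. The height of a Dyck path is the deepest level of its bracket-nesting, and the pairing of matched up/down longitudes (respectively left/right latitudes) induced by $P_\ver$ (respectively $P_\hor$) should correspond to a pairing of arcs of $L$. The content is that the deepest nesting seen in the horizontal direction and the deepest nesting seen in the vertical direction record the same topological feature of $L$, so they must agree; I would argue this by following an innermost nested family of arcs and checking that it is simultaneously innermost in both directions. Injectivity is then comparatively routine: from $P_\ver$ one recovers $P'_\ver$ (the two deleted steps are forced, being the extreme up/down longitudes), hence the up/down type of every longitude, and likewise the left/right type of every latitude from $P_\hor$; these types determine the orientation of every stitch of $L$, and together with the alternation structure they reconstruct all stitch positions, so $L$ is recovered up to translation, as already remarked in the text.

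The main obstacle is surjectivity. Given two Dyck paths of the same height $H$, I would prepend and append the forced extreme steps to form candidate sequences $P'_\ver,P'_\hor$, read these as an assignment of up/down types to a block of longitudes and of left/right types to a block of latitudes, and lay down the unique stitch configuration compatible with these orientations. The hard part is to prove that this configuration closes up into \emph{exactly one} simple closed curve rather than several loops or a non-closing strand. I expect the common height $H$ to be precisely the compatibility condition that makes this work: the nestings of the two Dyck paths pair up the same number of innermost features in each direction, and it is this matching of depths that lets the vertical and horizontal data interlock into a single loop. I would establish necessity of the equal-height condition along the way (unequal heights leave unmatched arcs, preventing closure) and sufficiency by an induction that peels off a matched pair from both Dyck paths at once, reducing to a strictly smaller pair of same-height Dyck paths while preserving the single-loop property — mirroring the inductive, topological flavor that the authors emphasize is required for the slicing arguments elsewhere in the paper.
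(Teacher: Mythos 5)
First, a point of reference: the paper does not prove Theorem~\ref{thm:pete} at all --- it is quoted from Pete~\cite{Pete}, and the text only remarks that Pete's proof ``analyz[es] a suitable height function.'' So your proposal cannot be compared against an in-paper argument; it has to stand on its own. It does not. Your decomposition into (i)--(iv) is sensible, and the easy parts are fine: the step count $w+1$ for $P'_{\ver}$ via the intermediate value theorem and Proposition~\ref{prop:curves} is correct, and the injectivity sketch (the longitude and latitude types determine each successive stitch as you trace the loop, so the loop is recovered up to translation) is essentially complete. But the three statements that carry all the content of the theorem are asserted rather than proved. For (i), you say a left-to-right sweep ``computes a genuine count of $L$'' whose nonnegativity and vanishing give the Dyck condition, and then concede that matching the $\pm 1$ steps to the increments of this count ``is the first genuinely nontrivial step'' --- that is the step, and it is missing. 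For (ii), the claim that the maximal vertical nesting depth equals the maximal horizontal nesting depth is exactly the surprising content of Pete's theorem; ``they record the same topological feature of $L$, so they must agree'' is a restatement of the goal, not an argument, and ``following an innermost nested family of arcs'' is not specified enough to check. For (iv), you openly label surjectivity the main obstacle and offer only the expectation that equal height is the right compatibility condition, plus an unexecuted peeling induction; nothing in the proposal shows that the configuration determined by two same-height Dyck paths closes into a single simple loop rather than several loops or an unbounded strand.

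I would add that your proposed route, even if completed, differs from Pete's: he works with a global height function on the lattice built from the label sequences $(\epsilon_i)$ and $(\eta_j)$, from which both the Dyck condition and the equal-height statement fall out simultaneously, whereas you are trying to attach a nesting depth to a single loop and reconcile two directional readings of it after the fact. That is not obviously doomed, but it front-loads exactly the comparison (vertical depth versus horizontal depth) that the height-function approach is designed to avoid. As written, the proposal is a plan with the hard lemmas left as placeholders, so it cannot be accepted as a proof.
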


Pete proves this theorem by analyzing a suitable \emph{height function}; see~\cite{Pete} for more details.

Since the semilength of a Dyck path is always an integer, it is an immediate consequence (Theorem~\ref{ThmOdd}) that every hitomezashi loop has odd width and odd height.  Pete also extracts more detailed structural information from Theorem~\ref{thm:pete}.  Let us say a horizontal stitch in a hitomezashi loop $L$ is \dfn{north-extremal} (respectively, \dfn{south-extremal}) if its latitude is maxmimal (respectively, minimal) among all horizontal stitches in $L$; define \dfn{east-extremal} and \dfn{west-extremal} vertical stitches analogously. 

\begin{theorem}[Pete~\cite{Pete}]\label{ThmExtremalMatch}
Let $L$ be a hitomezashi loop. Then $L$ has a north-extremal horizontal stitch with longitude $a$ if and only if it has a south-extremal horizontal stitch with longitude $a$. Similarly, $L$ has an east-extremal vertical stitch with latitude $b$ if and only if it has a west-extremal vertical stitch with latitude $b$.
\end{theorem}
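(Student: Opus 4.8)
The plan is to reduce the statement to a property of Pete's paired Dyck paths (Theorem~\ref{thm:pete}), using the local structure of $L$ near its top and bottom together with the Mirroring Lemma (Lemma~\ref{lem:mirror}). I will treat the horizontal statement; the vertical one follows by rotating everything by $90^\circ$.

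First I would pin down which longitudes can host extremal horizontal stitches. Orient $L$ clockwise, so that by Lemma~\ref{Lem:Parity} each longitude is an up-longitude or a down-longitude. A direct orientation check shows that a north-extremal horizontal stitch at longitude $i+\tfrac12$ forces the vertical stitches at longitude $i$ to point north and those at longitude $i+1$ to point south: the stitch is a ``cap,'' reached from below at its left end and leaving downward at its right end. The identical computation shows that a south-extremal horizontal stitch at longitude $i+\tfrac12$ forces \emph{exactly the same} orientations, this time because the stitch is a ``cup.'' Call such a longitude a \emph{peak-longitude}. Thus both the north-extremal and the south-extremal longitudes are contained in the set of peak-longitudes, and by Corollary~\ref{prop:outdents} every horizontal stitch at a peak-longitude is an outdent, these stitches alternating between caps and cups as one reads down the cross-section (using Proposition~\ref{prop:cross-section}). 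It remains to show that, at a peak-longitude, the top cap lies at the global maximum latitude $y_{\max}$ if and only if the bottom cup lies at the global minimum latitude $y_{\min}$.

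To prove this last equivalence I would compare the top and bottom of $L$ by reflection. In the simplest case, when every vertical line meets the interior of $L$ in a single interval, the upper boundary of $\overline L$ and its lower boundary are each a single contiguous, horizontally monotone subpath of $L$ (the upper eastward, the lower westward) spanning the full longitude range $[X_0,X_0+\width(L)]$; their endpoints are vertical stitches of equal longitudes, so the horizontal-axis version of Lemma~\ref{lem:mirror} identifies the lower boundary with the reflection of the upper boundary. A reflection preserves longitudes and exchanges $y_{\max}$ with $y_{\min}$, so it carries the north-extremal caps bijectively onto the south-extremal cups at the same longitudes, proving the theorem in this case. The general case would be handled by peeling off nested interior layers, pairing each maximal eastward arc of $L$ with the maximal westward arc sharing its two endpoint longitudes, and applying Lemma~\ref{lem:mirror} to each pair; here the bijection of Theorem~\ref{thm:pete}, and in particular the fact that $P_{\ver}(L)$ and $P_{\hor}(L)$ have the \emph{same height}, is what should guarantee both that this pairing exists and that the outermost pair carries the $y_{\max}$-caps onto the $y_{\min}$-cups.

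The main obstacle is precisely the arc-pairing in the general, non-monotone case. The difficulty is that a north-extremal cap can be the tip of an isolated ``spike'' whose two sides descend all the way to the main body of the loop, so that the collection of topmost stitches need not form a single subpath of $L$; consequently the upper and lower boundaries cannot naively be reflected onto one another. Making the pairing canonical---so that the spike producing a cap at $y_{\max}$ is matched with the spike producing a cup at $y_{\min}$ at the same longitude---is exactly the point at which the global combinatorics of the paired Dyck paths is needed, and I expect this to be the crux of the argument.
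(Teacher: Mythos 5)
The paper does not actually supply a proof of this theorem: it is quoted from Pete's work on corner percolation, where it is extracted from the Dyck-path bijection of Theorem~\ref{thm:pete} via an analysis of a height function (the authors only remark that an earlier draft reproved it by a ``very complicated double-induction'' using slicing). So the relevant question is whether your argument stands on its own, and it does not. Your first paragraph is a correct and useful reduction: the local orientation check showing that north-extremal and south-extremal stitches force the same pair of orientations on the flanking longitudes is right, and it correctly reduces the theorem to the claim that, at each such ``peak-longitude,'' the topmost stitch sits at $y_{\max}$ if and only if the bottommost sits at $y_{\min}$.

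The gap is exactly where you flag it, and it is fatal as written. The mirroring argument only goes through when the upper and lower boundaries of $\overline{L}$ are each a single horizontally monotone subpath with vertical endpoints at matching longitudes, i.e.\ in the vertically convex case. In general a north-extremal stitch can cap an isolated spike, the topmost stitches do not form one arc, and Lemma~\ref{lem:mirror} cannot be applied globally. Your proposed fix --- pairing each maximal eastward arc with ``the'' maximal westward arc sharing its endpoint longitudes --- is not constructed: you give no reason such a partner arc exists, no reason the pairing is well defined when several arcs share endpoint longitudes, and no verification of the monotonicity hypotheses of Lemma~\ref{lem:mirror} for each pair. Invoking the equal-height condition in Theorem~\ref{thm:pete} to ``guarantee'' the pairing is an appeal to the conclusion rather than an argument: the content of Pete's theorems \emph{is} that the maximum-height positions of $P_{\ver}(L)$ encode simultaneously the north- and south-extremal longitudes, and deriving that from the bijection requires the height-function analysis you have not carried out. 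As it stands you have proved the theorem only for vertically convex loops and correctly identified, but not closed, the crux of the general case.
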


\begin{theorem}[Pete~\cite{Pete}]\label{ThmTwoStitches}
Let $L$ be a hitomezashi loop. If $L$ has a north-extremal or south-extremal horizontal stitch with longitude $a$, then it has exactly two stitches with longitude $a$. If $L$ has an east-extremal or west-extremal vertical stitch with latitude $b$, then it has exactly two stitches with latitude $b$. 
\end{theorem}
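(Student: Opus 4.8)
The plan is to treat the horizontal case; the vertical case is identical after exchanging rows and columns. Write $a=i+\tfrac12$, so that the only stitches of longitude $a$ are horizontal, and it suffices to prove that $L$ meets the line $x=a$ exactly twice. I would orient $L$ clockwise. Inspecting the two vertical stitches flanking the given north-extremal stitch $s$ (which must descend, since nothing lies above latitude $\lat(s)$) and applying Lemma~\ref{Lem:Parity} shows that longitude $i$ is traversed upward and longitude $i+1$ downward. Since $s$ is an outdent, Corollary~\ref{prop:outdents} makes every stitch of longitude $a$ an outdent; call such a stitch a \emph{cap} if the interior of $L$ lies immediately below it and a \emph{cup} if the interior lies immediately above it. Tracking the interior across $x=a$ and using the odd-gap statement of Proposition~\ref{prop:cross-section}, the stitches of longitude $a$ listed from south to north, say $c_1,\dots,c_{2r}$, alternate cup, cap, cup, $\dots$, cap. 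Theorem~\ref{ThmExtremalMatch} then identifies $c_1$ as south-extremal and $c_{2r}=s$ as the given stitch, and the goal becomes $r=1$.

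The structural engine is the observation that $c_1,\dots,c_{2r}$ are \emph{all} of the longitude-$a$ stitches, so any arc of $L$ joining two cyclically consecutive $c_j$'s contains no longitude-$a$ stitch in its interior and therefore cannot cross the line $x=a$. Hence each such arc lies entirely in $\{x\le i\}$ or entirely in $\{x\ge i+1\}$; reading off the endpoint orientations, each cup is joined to the cap directly above it (the one bounding the same interior run) by an arc lying to the west, while the remaining arcs lie to the east. This presents $L$ as a meander: two families of pairwise non-crossing arcs on the two sides of the line, with the $c_j$ as the only crossings.

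To finish I would induct, peeling off the topmost arc. The point of the north-extremal hypothesis is that $s=c_{2r}$ is a true local maximum of the entire loop: nothing rises above $\lat(s)$, so near the top the western arc climbing into $s$ and the eastern arc descending from $s$ are vertically monotone. The Mirroring Lemma~\ref{lem:mirror} then exhibits these two arcs as reflections of one another across $x=a$, so together they cut off a sub-configuration symmetric about the line; excising it either closes $L$ off completely (the case $r=1$) or produces a shorter hitomezashi loop with the same value of $a$ but one fewer cup--cap pair, and induction on $\length(L)$ drives $r$ down to $1$.

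The step I expect to be the real obstacle is exactly the vertical monotonicity needed to invoke Lemma~\ref{lem:mirror}: a priori the western climbing arc could dip below the cup where it starts, and the extremal hypothesis constrains the loop only from above. I would try to rule this out by choosing $s$ together with the cup immediately beneath it and arguing, via the parity constraint of Lemma~\ref{Lem:Parity} and the cup--cap alternation, that any descent on that arc would force an additional longitude-$a$ stitch strictly between the two, contradicting their consecutiveness. If that turns out to be too delicate, the alternative is to bypass mirroring entirely and argue through Pete's height function (the mechanism behind Theorem~\ref{thm:pete}): the count $2r$ should equal the number of values the height function takes along $x=a$, and the north-extremal condition should pin that line to an extreme of the height function, forcing $r=1$. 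Matching these two pictures precisely is where I expect the bulk of the work to lie.
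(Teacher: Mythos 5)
A preliminary remark: the paper contains no proof of this theorem. It is quoted from Pete, who obtains it (together with Theorem~\ref{ThmExtremalMatch}) from the Dyck-path bijection of Theorem~\ref{thm:pete} by analyzing a height function; the authors explicitly decline to reprove Pete's results and only mention that an earlier draft used a complicated slicing-based double induction. So the ``fallback'' you describe at the end is in fact the intended route, and your main plan is a fresh attempt --- but it has genuine gaps beyond the one you flag. The first is the assertion that each cup is joined by a west arc to the cap directly above it. ``Reading off the endpoint orientations'' only tells you that every west arc leaves its cup heading north and arrives at its cap from the south; this is equally consistent with a nested non-crossing matching such as $\{c_1,c_4\},\{c_3,c_2\}$, where the arc from $c_3$ to $c_2$ also begins and ends moving upward but has negative net vertical displacement. (Parity is no help here: every non-crossing perfect matching of $2r$ points on a line automatically pairs odd positions with even ones.) Your claim is equivalent to saying that no west arc terminates at a cap below its initial cup, and that requires a real argument --- presumably a traversal argument in the spirit of Lemma~\ref{lem:5am} --- which is absent.

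The second gap is the excision step. Lemma~\ref{lem:mirror} requires the bounding horizontal stitches $s$ and $s'$ to have equal latitudes; taking $t=t'$ to be the north-extremal stitch, the two subpaths to be mirrored are the flanking west and east arcs, so their far endpoints must be cups of equal latitude, hence the \emph{same} cup. But when $r\ge 2$ this is impossible: if the west arc into $c_{2r}$ and the east arc out of $c_{2r}$ attached to the same cup, those two arcs together with the two stitches would already close up into a loop, contradicting the connectedness of $L$; so the two arcs attach to distinct cups at distinct latitudes and the hypotheses of Lemma~\ref{lem:mirror} cannot be met for the full arcs. If instead you mirror only the monotone initial portions near the top, then the surgery that is supposed to produce a shorter hitomezashi loop with one fewer cup--cap pair is not actually defined. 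One could hope to convert this very obstruction into the contradiction that rules out $r\ge 2$, but that conversion is exactly the content of the theorem and is not supplied. Your set-up --- the $2r$ alternating cups and caps, longitude $i$ up and $i+1$ down, the meander decomposition --- is correct and could be the skeleton of a proof, but both load-bearing steps are currently missing; the path of least resistance remains the height-function derivation from Theorem~\ref{thm:pete} that you mention but do not carry out.
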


The set of Dyck paths of prescribed semilength and height is well-studied, and this will be useful in Section~\ref{sec:counting}.

\section{Slicing, splitting, and splicing}\label{sec:slice}
The goal of this section is to define \emph{slicing} operations that we can apply to hitomezashi patterns. These operations are useful because they often reduce the widths and/or heights of hitomezashi loops; this will allow us to prove structural results about hitomezashi loops via induction in the next two sections. 

Throughout this section, we fix a half-integer $i_0$ and assume that we have a hitomezashi pattern $H$ with labels $\epsilon_i$ and $\eta_j$ such that the two adjacent vertical grid lines with longitudes $i_0-1/2$ and $i_0+1/2$ have the same label: $\epsilon_{i_0-1/2}=\epsilon_{i_0+1/2}$. We can obtain a new hitomezashi pattern with two vertical grid lines removed as follows: We delete the vertical strip of the hitomezashi pattern between the longitudes $i_0-1$ and $i_0+1$ and then fill in the gap by sliding together the left and right parts of the remaining pattern.  We call this operation \dfn{vertical slicing} (at longitude $i_0$).  The new hitomezashi pattern $\widetilde{H}$ obtained from $H$ by vertical slicing at $i_0$ is defined by the following labels:
\begin{itemize}
    \item If $i<i_0$, then $\widetilde{\epsilon}_i=\epsilon_{i-1}$.
    \item If $i> i_0$, then $\widetilde{\epsilon}_i=\epsilon_{i+1}$.
    \item For all $j$, $\widetilde{\eta}_j=\eta_j$.
\end{itemize}
We will write $\vslice_{i_0}$ for the vertical slicing operator at $i_0$.  In the notation above, $\vslice_{i_0}(H)=\widetilde{H}$.  In the same way, $\vslice_{i_0}$ sends a set $\mathcal S$ of stitches of $H$ to its image $\vslice_{i_0}(\mathcal S)$ in $\vslice_{i_0}(H)$.  Of course, \dfn{horizontal slicing} works the same way, where we delete a horizontal strip instead of a vertical strip; we denote the horizontal slicing operator at latitude $j_0$ by $\hslice_{j_0}$. 

\begin{figure}[ht]
 \centering
 \includegraphics[height=8.25cm]{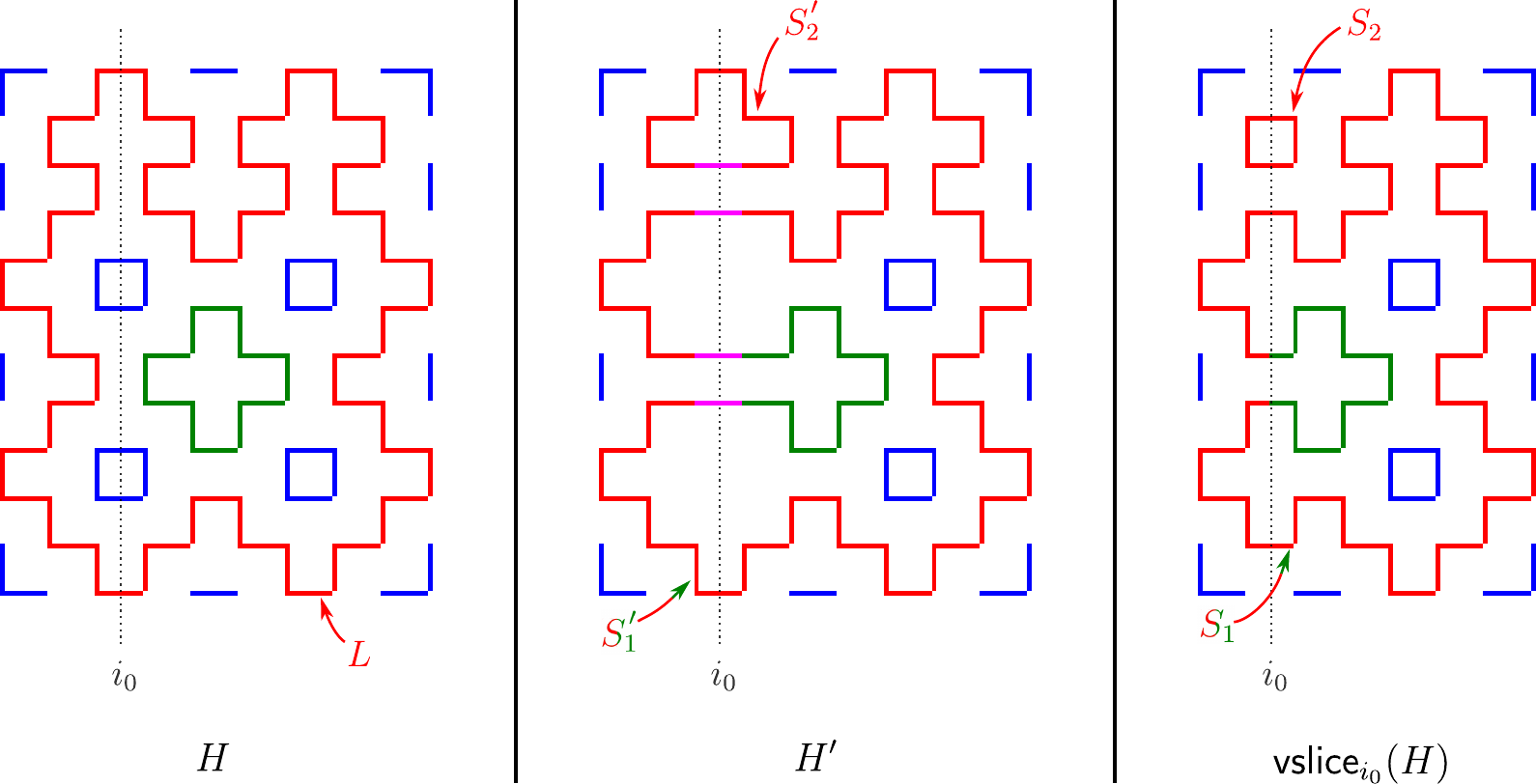}
  \caption{On the left is (part of) a hitomezashi pattern $H$. In the center is (part of) the pattern $H'$ obtained from $H$ by performing local moves and square deletions at longitude $i_0$. On the right is (part of) the hitomezashi pattern $\vslice_{i_0}(H)$. The post-slice components of the loop $L$ are $S_1$ and $S_2$. }\label{FigHito5}
\end{figure}

If a hitomezashi path $P$ lies either completely to the west or completely to the east of the deleted vertical strip, then its shape is unaffected by $\vslice_{i_0}$; the entire path $P$ just shifts to the east or the west.  The more interesting case is where $P$ does pass through the deleted vertical strip.  This is where it becomes important that we deleted two vertical grid lines with the \emph{same} label: the parts of $P$ on different sides of the slice still line up after the slicing; see Figure~\ref{FigHito5}.  It is possible for hitomezashi paths to merge with each other and/or break into more paths when we apply slicing, and our aim is to understand when and how this occurs.

It turns out that the slicing operation is easier to analyze if we first introduce \emph{local moves} that do not delete strips from the pattern. In this setting, we will actually need to consider curves that are not necessarily hitomezashi paths. Define a \dfn{grid path} to be an embedded curve in the plane that is contained in the union of all grid lines. We can orient grid paths just as we oriented hitomezashi paths; we make the convention that every closed grid path (i.e., grid loop) is oriented counterclockwise, so that its interior is on our left when we traverse it.

\begin{figure}[ht]
 \centering
 \includegraphics[height=1.52cm]{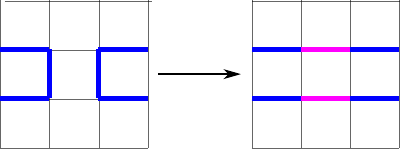}
  \caption{A local move.}\label{FigHito7}
\end{figure}

A \dfn{local move} consists of replacing two adjacent vertical stitches (with the same latitude and with longitudes $i_0-1/2$ and $i_0+1/2$) with two adjacent horizontal stitches as in Figure~\ref{FigHito7}. We apply such a local move at latitude $j$ if and only if there are vertical stitches at latitude $j$ with longitudes $i_0-1/2$ and $i_0+1/2$ and there are no horizontal stitches at longitude $i_0$ with latitude $j-1/2$ or $j+1/2$. If the two vertical stitches that are deleted come from different strands, then we call the local move a \dfn{splice}; if they come from the same strand, then we call the local move a \dfn{split}. We also define a \dfn{square deletion} to be the operation that deletes all stitches in a $4$-stitch square centered at longitude $i_0$. The image in the center of Figure~\ref{FigHito5} shows the pattern $H'$ that results from applying local moves and square deletions to the hitomezashi pattern $H$ on the left. The utility of these notions comes from the following proposition.

\begin{proposition}\label{prop:slicing-local-moves}
Let $H$ be a hitomezashi pattern, and let $H'$ be obtained from $H$ by applying all possible local moves and square deletions (at longitude $i_0$).  Then $\vslice_{i_0}(H)$ is equivalent to $H'$ up to homotopy.
\end{proposition}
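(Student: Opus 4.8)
The plan is to study the single column of unit cells lying between the two deleted grid lines, to show that after performing all local moves and square deletions the portion of the resulting pattern inside the slicing strip consists only of arcs that cross from one side to the other at a constant latitude, and finally to identify $\vslice_{i_0}$ with the map that collapses this strip. Write $i_0=k+1/2$, so that the removed grid lines are the vertical lines $x=k$ and $x=k+1$, and set $\Sigma=[i_0-1,i_0+1]\times\mathbb R$. The hypothesis $\epsilon_{i_0-1/2}=\epsilon_{i_0+1/2}$ says precisely that the vertical stitches on $x=k$ and on $x=k+1$ occur at the same latitudes. Since the only stitches of $H$ meeting the open strip are these vertical stitches together with the horizontal stitches of longitude $i_0$ (which are centered on the axis $x=i_0$), the part of $H$ inside the open strip is symmetric under reflection across $x=i_0$. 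I would record this symmetry first, as it drives the whole argument.

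Next I would classify each cell of the central column according to which of its four edges are present, noting that its two vertical edges are always present together or absent together. A direct check shows that a local move is applied to a cell exactly when its vertical edges are present and both horizontal edges are absent --- converting a mirror-image pair of ``caps'' into two straight horizontal crossings --- that a square deletion is applied exactly when all four edges are present, in which case they form an isolated unit-square loop inside the open strip, and that in the only remaining case (exactly one horizontal edge present) the three surviving edges form a ``U''-shaped detour. Because a local move on a cell introduces horizontal stitches only at that cell's two latitudes, and cells with present vertical edges are never vertically adjacent, these operations never interfere, so $H'$ is well defined regardless of the order in which they are performed. The crucial feature is that each of these local pieces enters and exits $\Sigma$ at a single latitude: a straight crossing does so tautologically, and the ``U''-detour enters from one side and leaves from the other at the common latitude of its two horizontal arms.

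The heart of the proof is then the claim that in $H'$ every arc contained in $\Sigma$ with both endpoints on $\partial\Sigma$ joins the left side $x=i_0-1$ to the right side $x=i_0+1$ at one and the same latitude, and that no closed loop of $H'$ lies inside the open strip. The second assertion is immediate from the cell analysis, since the only loops inside the open strip are unit squares, all of which have been removed by square deletions. For the first, I would use that local moves and square deletions preserve the reflection symmetry across $x=i_0$ (each replaces a symmetric set of stitches by a symmetric set), so $H'$ is still symmetric inside $\Sigma$. If an arc ran from $(i_0-1,m_1)$ to $(i_0+1,m_2)$ with $m_1\neq m_2$, its mirror image would be a second arc, necessarily distinct and disjoint from it, joining $(i_0-1,m_2)$ to $(i_0+1,m_1)$; the four endpoints then interleave cyclically around $\partial\Sigma$, forcing the two embedded arcs to cross, which is impossible since distinct strands are disjoint and each strand is simple. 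Hence $m_1=m_2$. I expect this equal-latitude claim to be the main obstacle: the reflection argument makes it clean, but one must check carefully that the mirror arc is genuinely disjoint from the original (it lies on a different strand, or on a disjoint sub-arc of one simple strand, and the two cannot share a boundary endpoint, since such a point would lie on the axis $x=i_0$) and that the interleaving of endpoints really does force an intersection.

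Finally I would realize $\vslice_{i_0}$ as the quotient map $c$ that deletes the open strip and glues $(i_0-1,y)$ to $(i_0+1,y)$ for all $y$. Outside the closed strip $H'$ coincides with $H$, and $c$ carries this part onto the corresponding part of $\vslice_{i_0}(H)$; inside the strip, the structural claim shows that $c$ collapses each crossing arc to a single point that fuses the matching left-hand and right-hand strands of $\vslice_{i_0}(H)$ at their common latitude. As every collapsed arc is an interval contracting to a point and no loop is trapped in the strip, $c$ restricts to a homotopy equivalence from $H'$ onto $\vslice_{i_0}(H)$, which is exactly the assertion of the proposition.
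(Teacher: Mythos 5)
Your argument is correct, and its core is the same as the paper's: the paper's proof is literally a ``proof by picture'' that examines, at each latitude carrying a pair of vertical stitches at longitudes $i_0\pm 1/2$, the four possible configurations of the enclosed cell (both horizontal edges absent, both present, or exactly one present) and declares the homotopy equivalence clear in each case---exactly your trichotomy of local moves, square deletions, and U-shaped detours. What you add, and what the paper leaves to the reader, is the global assembly: after all local moves and square deletions, every component of $H'$ inside the strip is an arc joining the two boundary lines at a common latitude and no loop is trapped there, so slicing amounts to collapsing disjoint contractible arcs to points, which is a homotopy equivalence. Two remarks on your extra machinery. First, the reflection-plus-interleaving argument is a pleasant alternative justification of the equal-latitude claim, but as stated it only constrains arcs with one endpoint on each side of the strip; it does not by itself exclude an arc that enters and leaves through the \emph{same} side, since such an arc and its mirror image have non-interleaving endpoints. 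This is harmless here because your cell-by-cell analysis already shows every arc is a straight crossing or a U-detour (hence crosses the strip), but you should then present the cell analysis as the primary proof and the reflection argument as a remark, not lean on the latter. Second, the map $c$ that ``deletes the open strip and glues'' is not literally defined on $H'$; the map you want is the one that shifts the two halves together outside the strip and sends each crossing arc to the single glued point at its common latitude---the homotopy equivalence then follows because one is collapsing finitely many pairwise disjoint closed intervals in a graph. With those adjustments your write-up is a rigorous version of the paper's picture proof rather than a genuinely different route.
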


\begin{proof}[Proof by picture]
At each latitude where $H$ has vertical stitches at longitudes $i_0-1/2$ and $i_0+1/2$, the patterns $H$, $H'$, and $\vslice_{i_0}(H)$ must locally be as depicted in one of the four cases in Figure~\ref{FigHito8}.  In each case, the homotopy equivalence is clear.  
\end{proof}

\begin{figure}[ht]
 \centering
 \includegraphics[height=6.26cm]{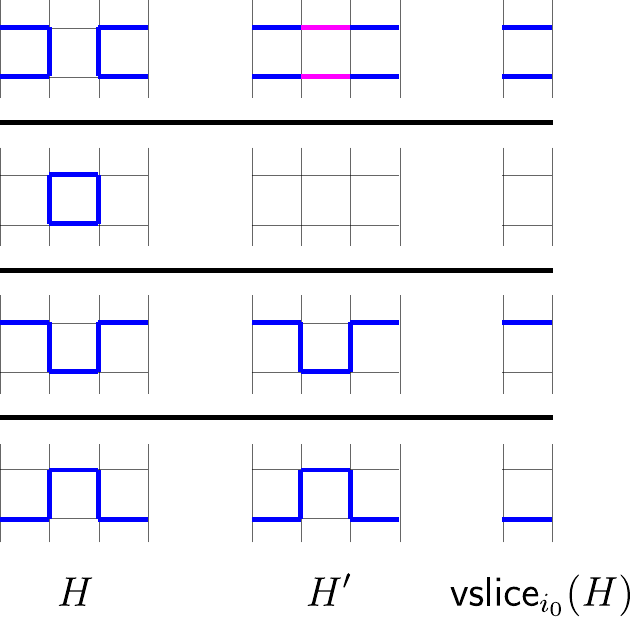}
  \caption{An illustration of Proposition~\ref{prop:slicing-local-moves}.}\label{FigHito8}
\end{figure}

Let $H$ be a hitomezashi pattern, and suppose we apply some number of local moves and square deletions to $H$ at longitude $i_0$. We call a grid path in the resulting pattern a \dfn{pseudo-hitomezashi path} (with respect to longitude $i_0$). Similarly, a \dfn{pseudo-hitomezashi loop} is a closed pseudo-hitomezashi path, and a \dfn{pseudo-hitomezashi strand} is a maximal pseudo-hitomezashi path (both still with respect to longitude $i_0$).   We can extend the parity argument of Lemma~\ref{Lem:Parity} to pseudo-hitomezashi paths.

\begin{lemma}\label{lem:parity-extended}
Let $P$ be an oriented pseudo-hitomezashi path with respect to longitude $i_0$. If $P$ has at least one vertical stitch at longitude $x$, then all of the stitches of $P$ at longitude $x$ are oriented in the same direction. Moreover, the orientations of the vertical stitches of $P$ at longitude $i_0-1/2$ are the opposite of the orientations of the vertical stitches of $P$ at longitude $i_0+1/2$.
\end{lemma}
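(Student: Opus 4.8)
The plan is to reduce both assertions to a single statement about the traversal of $P$ and then to establish that statement by tracking how the local moves disrupt the strict horizontal/vertical alternation that powers Lemma~\ref{Lem:Parity}. The basic observation is that traversing one edge of any grid path flips the parity of $x+y$ at the current vertex $(x,y)$; hence if $s$ and $s'$ are vertical stitches of $P$ whose initial endpoints are joined by $N$ edges along $P$, then those two endpoints have the same value of $x+y\pmod 2$ exactly when $N$ is even. Since local moves and square deletions only ever \emph{delete} vertical stitches (and insert horizontal ones), every lattice point still carries at most one vertical stitch, so the vertical stitches of $P$ at a fixed longitude continue to have lower endpoints of one common latitude-parity; a direct check then shows that the parity of $x+y$ at the initial endpoint of a vertical stitch determines its orientation (up versus down). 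It follows that both conclusions reduce to the single claim that \emph{between the initial endpoints of any two vertical stitches of $P$, the number of traversed edges is even}. For the first conclusion one takes two stitches of the same longitude; for the ``moreover'' clause one takes a stitch of longitude $i_0-1/2$ and one of longitude $i_0+1/2$, where the extra parity contribution $(i_0+1/2)-(i_0-1/2)=1$ converts ``equal orientation'' into ``opposite orientation.'' Here it is essential that $\epsilon_{i_0-1/2}=\epsilon_{i_0+1/2}$, which guarantees that the two families of vertical stitches have lower endpoints of the same parity and hence that the bookkeeping is symmetric.

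To prove the evenness claim, I would decompose $P$ into maximal horizontal runs alternating with maximal vertical runs. Because no lattice point acquires a second vertical stitch, no two vertical stitches of $P$ are consecutive, so every vertical run has length $1$. If, in addition, every horizontal run has odd length, then a traversal from one vertical stitch to the initial endpoint of another passes through some number $k$ of single vertical stitches together with $k$ odd-length horizontal runs, giving a total number of edges congruent to $k+k\equiv 0\pmod 2$; this is exactly the evenness claim.

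The main obstacle is thus to show that every horizontal run has odd length, and this is precisely where genuine hitomezashi alternation fails. I would argue from the anatomy of a local move. A horizontal run containing no stitch of longitude $i_0$ must be capped on each side by a vertical stitch that survives in $H'$---for if a bounding vertical stitch had been deleted, the horizontal stitch that the local move inserts at that spot would extend the run all the way to longitude $i_0$---so such a run is an honest length-$1$ run of $H$. A run that does contain a longitude-$i_0$ stitch contains one inserted by a local move, joining $(i_0-1/2,y)$ to $(i_0+1/2,y)$; the local-move condition forces the $H$-stitches at these two points to run west and east, respectively, and these stitches (of longitude $i_0-1$ and $i_0+1$) survive and are capped by surviving vertical stitches, so the run is exactly the three stitches west--inserted--east and has length $3$. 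The only threat to oddness would be two inserted stitches adjacent within one run, but the local-move condition prohibits moves at adjacent latitudes, so no latitude receives two inserted stitches and no run exceeds length $3$. Finally, square deletions remove entire $4$-loops, which form separate strands and are therefore irrelevant to $P$. This case analysis yields the odd-run claim and completes the proof.
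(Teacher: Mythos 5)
Your proof is correct and follows essentially the same route as the paper, which only sketches this argument via the key observation that $P$ differs from a genuine hitomezashi path only at latitudes carrying three consecutive horizontal stitches of longitudes $i_0-1$, $i_0$, $i_0+1$ --- exactly your length-$3$ runs --- combined with the parity-of-$x+y$ bookkeeping from Lemma~\ref{Lem:Parity}. The one small imprecision is your claim that every run containing a longitude-$i_0$ stitch contains one \emph{inserted} by a local move: a longitude-$i_0$ horizontal stitch already present in $H$ can survive into $H'$ (precisely because it blocks the local moves at the two adjacent half-integer latitudes), but by that same blocking argument its capping vertical stitches also survive, so it forms a length-$1$ run and the oddness conclusion is unaffected.
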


\begin{proof}[Proof (sketch)]
They key observation is that $P$ is essentially the same as a hitomezashi path, except that there might be some latitudes at which $P$ has three consecutive horizontal stitches with longitudes $i_0-1,i_0,i_0+1$. The proof of the first statement is essentially the same as the proof of Lemma~\ref{Lem:Parity}; we leave the details to the reader. The second statement follows easily from our standing assumption that $\epsilon_{i_0-1/2}=\epsilon_{i_0+1/2}$.
\end{proof}

Let us say a pseudo-hitomezashi path $P$ is \dfn{agreeable} (for longitude $i_0$) if each of its vertical stitches with longitude $i_0-1/2$ (respectively, $i_0+1/2$) is oriented from north to south (respectively, south to north). Similarly, say $P$ is \dfn{anti-agreeable} (for longitude $i_0$) if each of its vertical stitches with longitude $i_0-1/2$ (respectively, $i_0+1/2$) is oriented from south to north (respectively, north to south). 
Lemma~\ref{lem:parity-extended} implies that every pseudo-hitomezashi path is either agreeable or anti-agreeable.  In particular, checking the orientation of a single vertical stitch at longitude $i_0-1/2$ or $i_0+1/2$ lets us determine which of these two possibilities occurs.

Recall that if $C$ is a closed curve, then $\overline{C}$ denotes the union of $C$ and its interior.

\begin{lemma}\label{prop:outdent-deletion-splice}
Suppose we perform a splice at longitude $i_0$ that makes an agreeable pseudo-hitomezashi loop $L$ merge with a pseudo-hitomezashi strand $S$. Let $L'$ be the pseudo-hitomezashi strand that results. Then both $S$ and $L'$ are contained in $\overline{L}$, and $L'$ is an agreeable pseudo-hitomezashi loop.
\end{lemma}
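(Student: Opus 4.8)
The plan is to replace the orientation condition in the definition of agreeability by an equivalent statement about which side the interior lies on. Writing $R_L$ and $R_S$ for the interiors of $L$ and $S$, I claim that a pseudo-hitomezashi loop is agreeable (for $i_0$) if and only if, for each of its vertical stitches at longitude $i_0-1/2$ or $i_0+1/2$, its interior lies on the side of that stitch facing the line $x=i_0$. Indeed, at longitude $i_0-1/2$ the agreeable (north-to-south) orientation puts the counterclockwise interior to the east, i.e.\ toward $x=i_0$, and at $i_0+1/2$ the agreeable (south-to-north) orientation puts it to the west, again toward $x=i_0$. The splice occurs at some latitude where $L$ and $S$ supply the two vertical stitches at longitudes $i_0-1/2$ and $i_0+1/2$; I would treat the case where the stitch belonging to $L$ has longitude $i_0-1/2$, the other case being the mirror image across $x=i_0$. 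Since $L$ is agreeable, $R_L$ lies to the east of this stitch, so the unit square $Q$ between the two spliced stitches (the one that acquires the two new horizontal stitches as its top and bottom edges) lies in $R_L$.

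Next I would establish the two containments. The east edge of $Q$ is the spliced stitch of $S$; because $S\neq L$, this segment is disjoint from $L$, so a neighborhood of it lies entirely in $R_L$, and hence $S$ meets the open region $R_L$. Since $S$ is connected and (as distinct strands cannot cross) disjoint from $L=\partial R_L$, it lies entirely in $R_L\subseteq\overline L$. In particular $S$ is bounded, so it is a loop rather than a bi-infinite strand. Splicing the two loops $L$ and $S$ (deleting one stitch from each and inserting the two horizontal edges of $Q$) therefore yields a single loop $L'$, and since $L'$ is built from arcs of $L$, arcs of $S$, and edges of $\overline Q$, all contained in $\overline L$, we get $L'\subseteq\overline L$.

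The crux is the agreeability of $L'$, which I would obtain by pinning down $R_{L'}$. First I would rule out that $S$ is agreeable: if it were, then $R_S$ would lie to the west of the spliced stitch of $S$, forcing $Q\subseteq R_S$; but the west edge of $Q$ is the spliced stitch of $L$, across which lies the exterior of $L$, and crossing that edge does not cross $S$, so an exterior point of $L$ would land in $R_S\subseteq R_L$, a contradiction. Hence $S$ is anti-agreeable, $R_S$ lies on the side of each of its stitches facing away from $x=i_0$, and $Q$ lies in the annular region $R_L\setminus\overline{R_S}$. Standard Jordan-curve reasoning about cutting this annulus along the bridge then identifies $R_{L'}=R_L\setminus\overline{R_S}$. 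Finally I would check the interior-side criterion for every vertical stitch of $L'$ at longitude $i_0\pm1/2$, each of which is inherited from $L$ or from $S$. Near an inherited $L$-stitch, $\overline{R_S}$ is absent (being disjoint from $L$), so $R_{L'}$ agrees with $R_L$ and lies toward $x=i_0$; near an inherited $S$-stitch, $R_{L'}$ occupies the exterior-of-$S$ side, which is the $x=i_0$ side precisely because $S$ is anti-agreeable. Thus every such stitch has its interior toward $x=i_0$, so $L'$ is agreeable.

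I expect the main obstacle to be the final paragraph: justifying $R_{L'}=R_L\setminus\overline{R_S}$ rigorously rather than from the intuitive bridge picture, and tracking the interior side cleanly through the surgery. The forcing of $S$ to be anti-agreeable is the key input, since it is exactly what places the bridge in the correct region and keeps the interior of $L'$ on the $x=i_0$ side of the inherited $S$-stitches.
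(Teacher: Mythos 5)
Your proposal is correct and follows essentially the same route as the paper's proof: the spliced cell lies in the interior of $L$ because $L$ is agreeable, which forces $S$ into the interior of $L$ (hence $S$ is a bounded loop and $L'\subseteq\overline L$) and forces $S$ to be anti-agreeable, after which agreeability of $L'$ is read off from the inherited stitches --- the paper phrases this via orientations (stitches of $L$ keep their orientation, stitches of $S$ reverse) and invokes Lemma~\ref{lem:parity-extended} to reduce to checking a single stitch, while you recast agreeability as an ``interior faces $x=i_0$'' criterion and check every stitch. One small correction to your step identifying the new interior: the spliced cell $Q$ ends up in the \emph{exterior} of $L'$ (its west and east edges are deleted, so its interior connects to the old exterior of $L$), so the identity should be $R_{L'}=R_L\setminus\bigl(\overline{R_S}\cup\overline{Q}\bigr)$ rather than $R_L\setminus\overline{R_S}$; this does not affect your conclusion, since no vertical stitch of $L'$ at longitude $i_0\pm 1/2$ is adjacent to $Q$ and your local checks near the inherited stitches go through unchanged.
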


\begin{proof}
Let $s$ and $t$ be the stitches in $L$ and $S$, respectively, that are replaced in the application of the splice.  Without loss of generality, $s$ and $t$ are at longitudes $i_0-1/2$ and $i_0+1/2$, respectively  (otherwise, reflect everything across a vertical axis).  Note also that $s$ and $t$ have the same latitude.  Since $L$ is agreeable, $s$ is oriented from north to south.  This implies that the cell that has $s$ and $t$ as two of its opposite sides must lie in the interior of $L$. Consider the line segment $\Gamma$ whose endpoints are the midpoints of $s$ and $t$.  Then one endpoint of $\Gamma$ is in $L$, one endpoint of $\Gamma$ is in $S$, and the rest of $\Gamma$ lies in the interior of $L$. This implies that $S$ is contained in the interior of $L$. It follows that $S$ and $L'$ are both contained in $\overline{L}$.  In particular, since there are only finitely many stitches in $\overline{L}$, we see that $L'$ is a pseudo-hitomezashi loop. Note that $\Gamma$ passes through the exterior of $S$; this forces $t$ to be oriented from north to south. The orientations of the stitches in $S$ (other than $t$) get reversed when $L$ merges with $S$.  See Figure~\ref{FigHito9}.

It remains to show that $L'$ is agreeable. If $L'$ has no vertical stitches at longitude $i_0-1/2$ or $i_0+1/2$, then it is vacuously agreeable.  Otherwise, by the observations made in the paragraph immediately preceding this lemma, it suffices to check the orientation of a single vertical stitch of $L'$ at longitude $i_0-1/2$ or $i_0+1/2$; let $s_0$ be such a stitch. For the sake of convenience, we will consider the case where $\lon(s_0)=i_0+1/2$; the case where $\lon(s_0)=i_0-1/2$ is virtually identical. We need to check that $s_0$ is oriented from south to north in $L'$.  If $s_0$ is also a stitch of $L$, then it is oriented from south to north in $L'$ because $L$ is agreeable and because the orientations of stitches of $L$ do not change when $L$ merges with $S$.  (This is the case because the entire exterior of $L$ is also exterior to $L'$.)  Now suppose $s_0$ is not a stitch of $L$; then it must be a stitch of $S$.  We have observed that every pseudo-hitomezashi path is agreeable or anti-agreeable. Since $t$ is oriented from north to south in $S$, we know that $S$ is anti-agreeable, and in particular $s_0$ is oriented from north to south in $S$.  Since the orientations of the stitches in $S$ (other than $t$) get reversed when $L$ merges with $S$, we conclude that $s_0$ is oriented from south to north in $L'$, as desired.
\end{proof}

\begin{figure}[ht]
 \centering
 \includegraphics[height=3.24cm]{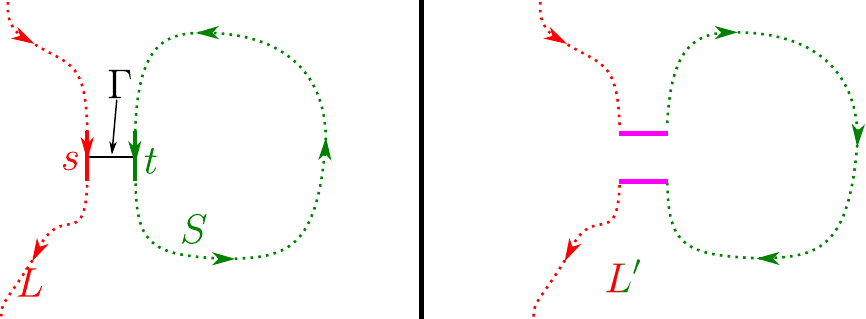}
  \caption{An illustration of Lemma~\ref{prop:outdent-deletion-splice}. The dotted curvy curves are schematic illustrations of the relevant grid paths.}\label{FigHito9}
\end{figure}

\begin{lemma}\label{lem:splitting}
Let $L$ be an agreeable pseudo-hitomezashi loop with at least one location where we can perform a split.  Let $L_1$ and $L_2$ denote the pseudo-hitomezashi strands into which $L$ breaks when we apply this split.  Then $L_1$ and $L_2$ are agreeable pseudo-hitomezashi loops, and neither is contained in the interior of the other.
\end{lemma}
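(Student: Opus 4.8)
The plan is to track how the counterclockwise orientation of $L$ is inherited by $L_1$ and $L_2$, and then to read off both agreeability and non-nesting from this. First I would normalize the picture: let $s$ and $t$ be the two vertical stitches deleted by the split, with $\lon(s)=i_0-1/2$ and $\lon(t)=i_0+1/2$ (reflecting across a vertical axis if necessary), and let $Q$ be the unit cell lying between them. Since $L$ is agreeable, $s$ is oriented north-to-south; as $L$ is traversed counterclockwise with its interior on the left, the interior of $L$ lies immediately east of $s$, so $Q\subseteq\overline{\mathrm{int}(L)}$. Writing $A,B,C,D$ for the northwest, southwest, southeast, and northeast corners of $Q$, agreeability forces $s$ to run $A\to B$ and $t$ to run $C\to D$, so $L$ is traversed in the cyclic order $A\xrightarrow{s}B\xrightarrow{\alpha}C\xrightarrow{t}D\xrightarrow{\beta}A$, where $\alpha,\beta$ are the two arcs of $L$ complementary to $s$ and $t$. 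The split replaces $s,t$ by the top edge $AD$ and bottom edge $BC$ of $Q$, so the resulting loops are exactly $L_1=\beta\cup AD$ and $L_2=\alpha\cup BC$. A short check using the local-move hypothesis (there are no horizontal stitches of longitude $i_0$ at the latitudes of $A$ and $B$) shows that $L_1$ and $L_2$ are \emph{disjoint} simple closed curves.

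For agreeability, the key claim is that the counterclockwise orientation of $L_1$ runs along $\beta$ in the same direction as $L$ does (and likewise $L_2$ along $\alpha$). To see this, fix a generic point $q$ in the interior of an edge of $\beta$, away from $Q$, and let $z$ be the point just to the left of $q$ (the interior side of $L$). Then $z\in\mathrm{int}(L)$, and crossing $\beta$ once lands us at a point $z'\in\mathrm{ext}(L)$. Because $\mathrm{ext}(L)$ is connected and unbounded, there is a path from $z'$ to infinity inside $\mathrm{ext}(L)$; this path avoids $\beta\subseteq L$, and it also avoids $AD$ because $AD\subseteq\overline{\mathrm{int}(L)}$ is disjoint from $\mathrm{ext}(L)$ (the segment $AD$ borders $Q^\circ\subseteq\mathrm{int}(L)$ and does not belong to $L$). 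Hence a path from $z$ to infinity crosses $L_1=\beta\cup AD$ exactly once, so $z\in\mathrm{int}(L_1)$. Thus the interior of $L_1$ lies on the left of $\beta$, meaning the counterclockwise orientation of $L_1$ agrees with that of $L$ along $\beta$. Every vertical stitch of $L_1$ at longitude $i_0\pm 1/2$ lies on $\beta$ and therefore keeps its (agreeable) orientation from $L$; by the dichotomy in Lemma~\ref{lem:parity-extended}, checking this single stitch shows $L_1$ is agreeable (and if $\beta$ has no such stitch, $L_1$ is vacuously agreeable). The identical argument applies to $L_2$.

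With both $L_1$ and $L_2$ now known to be counterclockwise, non-nesting follows from a winding-number computation. As oriented $1$-cycles, $[L_1]+[L_2]-[L]$ telescopes to the clockwise boundary of $Q$, that is, $[L_1]+[L_2]=[L]-[\partial Q]$ with $\partial Q$ oriented counterclockwise. Taking winding numbers about any point $z$ off the three curves, and using that all of $L,L_1,L_2$ are counterclockwise, gives
\[
\mathbf{1}[z\in\mathrm{int}(L_1)]+\mathbf{1}[z\in\mathrm{int}(L_2)]=\mathbf{1}[z\in\mathrm{int}(L)]-\mathbf{1}[z\in Q^\circ].
\]
Since $Q^\circ\subseteq\mathrm{int}(L)$, the right-hand side is always $0$ or $1$, so no point lies in both $\mathrm{int}(L_1)$ and $\mathrm{int}(L_2)$; that is, $\mathrm{int}(L_1)\cap\mathrm{int}(L_2)=\varnothing$. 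If $L_1$ were contained in $\mathrm{int}(L_2)$, then $\mathrm{int}(L_1)\subseteq\mathrm{int}(L_2)$ would force $\mathrm{int}(L_1)=\varnothing$, which is absurd; hence neither loop lies in the interior of the other.

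The main obstacle is the orientation step: the content of the lemma is really that agreeability is transferred to $L_1$ and $L_2$ without being flipped, and once the orientations are pinned down the rest (including non-nesting) is bookkeeping. I expect the delicate points to be verifying that the escape path from $z'$ can simultaneously avoid $\beta$ and the new edge $AD$ (which is exactly where $AD\subseteq\overline{\mathrm{int}(L)}$ is used) and confirming that $L_1,L_2$ are genuinely disjoint simple closed curves so that the winding-number relation applies; both ultimately rely on the local-move hypothesis that no horizontal stitches of longitude $i_0$ sit at the top or bottom of $Q$.
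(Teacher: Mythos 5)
Your proposal is correct and follows essentially the same route as the paper: both arguments hinge on the observation that agreeability forces the cell $Q$ between the two deleted vertical stitches to lie in $\overline{L}$, from which the orientation transfer (hence agreeability of $L_1$ and $L_2$) and the non-nesting both follow. The paper simply asserts the resulting decomposition $\overline{L_1}\cup\overline{L_2}\cup Q=\overline{L}$ by picture, whereas you supply the crossing-number and winding-number details that justify it; the content is the same.
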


\begin{proof}
Let $s$ and $s'$ be the two vertical stitches of $L$ at the location of the split, and let $t$ and $t'$ be the horizontal stitches that replace $s$ and $s'$ when we perform the split. Because $L$ is agreeable, the (closed) cell $\alpha$ enclosed by $s,s',t,t'$ is in $\overline L$. It follows that $\overline L_1\cup\overline L_2\cup\alpha=\overline L$, where $\overline L_1$, $\overline L_2$, and $\alpha$ are disjoint outside of the stitches $t$ and $t'$. Neither of $L_1$ and $L_2$ is contained in the interior of the other. Furthermore, all of the stitches in $L$ other than $s$ and $s'$ preserve their orientations in $L_1$ and $L_2$. Hence, $L_1$ and $L_2$ are agreeable pseudo-hitomezashi loops. See Figure~\ref{FigHito10}.
\end{proof}

\begin{figure}[ht]
 \centering
 \includegraphics[height=3.22cm]{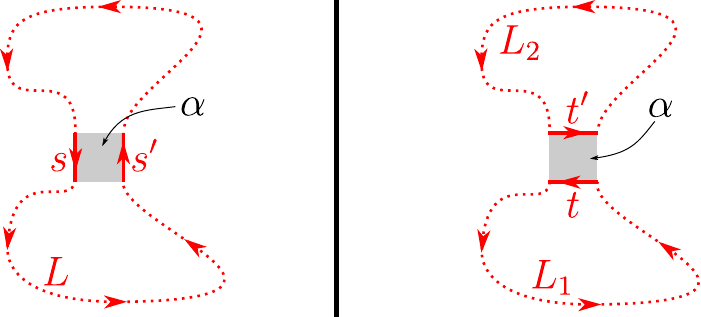}
  \caption{An illustration of Lemma~\ref{lem:splitting}. The dotted curvy curves are schematic illustrations of the relevant grid paths.}\label{FigHito10}
\end{figure}

We are finally ready to characterize how loops merge and break up under slicing.  For $L$ a hitomezashi loop in a hitomezashi pattern $H$, let $S_1,\ldots,S_r$ denote the distinct hitomezashi strands in $\vslice_{i_0}(H)$ that contain stitches from $\vslice_{i_0}(L)$.  (We know that $r$ is finite since $L$ has only finitely many stitches.) We call $S_1,\ldots,S_r$ the \dfn{post-slice components} of $L$. See Figure~\ref{FigHito5}.

\begin{proposition}\label{prop:outdent-deletion-split}
Let $L$ be a hitomezashi loop in a hitomezashi pattern $H$, and suppose that $i_0$ is an outdent longitude for $L$. Suppose $L$ splits into post-slice components $S_1,\ldots,S_r$ when we apply $\vslice_{i_0}$ to $H$. Then $S_1,\ldots,S_r$ are all hitomezashi loops contained in $\vslice_{i_0}(\overline L)$, and none of $S_1,\ldots,S_r$ is contained in the interior of another.
\end{proposition}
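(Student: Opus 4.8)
The plan is to orient $L$ counterclockwise and first observe that the outdent hypothesis makes $L$ \emph{agreeable} for $i_0$. Indeed, if $s$ is an outdent of $L$ at longitude $i_0$ with flanking vertical stitches $s_1$ (at longitude $i_0-1/2$) and $s_2$ (at longitude $i_0+1/2$), then the cell between $s_1$ and $s_2$ lies in the interior of $L$, and tracing the counterclockwise orientation through the ``U'' or ``$\cap$'' shape formed by $s_1,s,s_2$ shows that $s_1$ is oriented north-to-south and $s_2$ south-to-north. By Lemma~\ref{Lem:Parity}, every vertical stitch of $L$ at longitude $i_0-1/2$ (respectively $i_0+1/2$) inherits this orientation, so $L$ is agreeable. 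I would then invoke Proposition~\ref{prop:slicing-local-moves} to replace $\vslice_{i_0}$ by the combinatorially cleaner process of performing all local moves and square deletions: fixing some order for these operations, I let $H'$ be the resulting pattern and track the \emph{descendants} of $L$, meaning the pseudo-hitomezashi loops obtained by following $L$ through the operations (where a splice that absorbs a foreign strand, and a split that breaks a loop into two, both produce descendants). Via the homotopy equivalence of Proposition~\ref{prop:slicing-local-moves}, each post-slice component $S_i$ meets $\vslice_{i_0}(L)$ and hence corresponds to a descendant, so it suffices to prove that every descendant of $L$ in $H'$ is a loop contained in $\overline L$ and that no two descendants are nested.

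I would prove this by induction on the operations, maintaining the invariant that every descendant is an agreeable pseudo-hitomezashi loop contained in $\overline L$, with no two descendants nested (neither contained in the interior of the other). The base case is $L$ itself. A square deletion removes a $4$-stitch square loop lying in $\overline L$, which cannot disturb the invariant for the remaining descendants. When an operation is a \emph{split} of a descendant $M$, Lemma~\ref{lem:splitting} produces two agreeable loops $M_1,M_2$ with $\overline{M_1}\cup\overline{M_2}\subseteq\overline M\subseteq\overline L$ and with neither contained in the interior of the other; since the descendant closures only shrink, non-nesting with the other descendants is preserved.

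The delicate case, and the one I expect to be the main obstacle, is a \emph{splice}. The crucial sub-observation is that a splice can never merge two distinct descendants: if a local move were to pair a stitch of a descendant $M$ with a stitch of a distinct descendant $N$, then taking the descendant that supplies the stitch at longitude $i_0-1/2$ to play the role of the agreeable loop in Lemma~\ref{prop:outdent-deletion-splice} (the cell between the two paired stitches lies in its interior, since it is agreeable), that lemma would force the other descendant into its interior, contradicting the inductive non-nesting hypothesis. Hence any splice touching a descendant $M$ merges it with a strand $S$ that is \emph{not} a descendant; Lemma~\ref{prop:outdent-deletion-splice} then shows that $S$ lies in the interior of $M$, that the resulting loop $M'$ is agreeable, and that $M'\subseteq\overline M\subseteq\overline L$. (In particular the lemma rules out splicing with any bi-infinite or exterior strand, since it places $S$ inside the bounded region $\overline M$.) As before, non-nesting is preserved because the descendant closures do not grow, and splices or square deletions involving no descendant are irrelevant to the invariant.

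Carrying the invariant through all the operations shows that the descendants of $L$ in $H'$ are agreeable, contained in $\overline L$, and pairwise non-nested; transferring through Proposition~\ref{prop:slicing-local-moves} identifies the post-slice components $S_1,\dots,S_r$ with (a subset of) these descendants. In particular each $S_i$ is bounded and hence a genuine hitomezashi loop, each lies in $\vslice_{i_0}(\overline L)$, and none is contained in the interior of another. The two points I expect to require the most care are verifying the shared-cell argument that rules out descendant-descendant splices (so that Lemma~\ref{prop:outdent-deletion-splice} always applies with an agreeable loop in the role of the merging loop), and confirming that the homotopy equivalence of Proposition~\ref{prop:slicing-local-moves} genuinely preserves the properties of being a bounded loop, lying in $\overline L$, and being non-nested, so that the structure established in $H'$ passes faithfully to $\vslice_{i_0}(H)$.
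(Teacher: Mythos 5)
Your proposal is correct and takes essentially the same route as the paper: establish that the outdent hypothesis makes $L$ agreeable, pass to local moves and square deletions via Proposition~\ref{prop:slicing-local-moves}, iterate Lemmas~\ref{prop:outdent-deletion-splice} and~\ref{lem:splitting} to maintain the invariant that all descendants are agreeable, non-nested loops in $\overline{L}$, and transfer back through the homotopy equivalence. You in fact supply more detail than the paper's proof, which compresses the induction into ``iterative applications of the same lemmas''---in particular, your explicit argument that agreeability plus non-nesting rules out a splice between two descendants is a point the paper leaves implicit.
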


\begin{proof}
Using Lemma~\ref{lem:parity-extended} and the hypothesis that $i_0$ is an outdent longitude for $L$, we see that $L$ is agreeable. Let $H'$ denote the pattern of pseudo-hitomezashi strands that is obtained from $H$ by applying all local moves and square deletions at longitude $i_0$.  Let $S'_1, \ldots, S'_r$ denote the pseudo-hitomezashi strands in $H'$ corresponding to $S_1, \ldots, S_r$, as guaranteed by Proposition~\ref{prop:slicing-local-moves}.
Lemmas~\ref{prop:outdent-deletion-splice} and~\ref{lem:splitting} guarantee that $S_1',\ldots,S_r'$ are contained in $\overline L$, so they can be obtained from $L$ by applying finitely many local moves. Iterative applications of the same lemmas also guarantee that $S_1',\ldots,S_r'$ are pseudo-hitomezashi loops contained in $\overline L$ and that none of them is contained in the interior of another. The desired result follows from another application of Proposition~\ref{prop:slicing-local-moves}.
\end{proof}

\begin{remark}
There is an obvious analogue of Proposition~\ref{prop:outdent-deletion-split} for horizontal slicing. Both versions will be heavily used in the next section. 
\end{remark}

One could also consider slicing at an indent longitude or latitude. However, Proposition~\ref{prop:outdent-deletion-split} is not true as written when we replace outdent slicing with indent slicing; for this reason, it is important that we use outdent slicing (rather than indent slicing) in our later arguments. It appears that indent slicing exhibits a  ``dual'' behavior to outdent slicing; for indent slicing, for instance, a loop can merge only with a strand that is \emph{not} contained in its interior.  One new complication that arises in formulating a dual version of Proposition~\ref{prop:outdent-deletion-split} is that it might be possible for a loop to merge with an infinite strand.  We leave this line of inquiry as an open problem.

We end this section with one final lemma about slicing operations that will be useful for proving Theorems~\ref{thm:possible-length} and~\ref{thm:possible-area}. 

\begin{lemma}\label{lem:5am}
Let $L$ be a hitomezashi loop in a hitomezashi pattern $H$. Let $i_0$ be the largest (half-integer) longitude at which there is a horizontal stitch that is either an outdent or an indent of $L$. Then $i_0$ is an outdent longitude of $L$.
\end{lemma}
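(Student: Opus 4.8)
The plan is to argue by contradiction. Orient $L$ clockwise as in Section~\ref{sec:pete}, so that each longitude at which $L$ has vertical stitches is either an up-longitude or a down-longitude (Lemma~\ref{Lem:Parity}). Let $i_{\max}$ denote the eastmost longitude at which $L$ has vertical stitches. Since the interior of $L$ lies entirely to the west of these stitches and a clockwise orientation keeps the interior on the right, the stitches at longitude $i_{\max}$ run north-to-south; that is, $i_{\max}$ is a down-longitude. I will write $i_0 = i + 1/2$ and suppose, for contradiction, that $i_0$ is an indent longitude (by Corollary~\ref{prop:outdents} all horizontal stitches at $i_0$ share a single type, which by hypothesis is indent or outdent).

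The key step is a dictionary translating the indent/outdent/nondent trichotomy into the orientations of the two flanking longitudes. A horizontal stitch at longitude $i_0 = i+1/2$ is flanked by vertical stitches at the consecutive longitudes $i$ and $i+1$. Inspecting the possible local configurations---the two flanking verticals either have different latitudes (a nondent) or the same latitude, forming a U-turn whose enclosed cell is interior (an outdent) or exterior (an indent)---and using the interior-on-the-right convention, one finds: the stitch is an \emph{outdent} precisely when $i$ is an up-longitude and $i+1$ is a down-longitude; it is an \emph{indent} precisely when $i$ is a down-longitude and $i+1$ is an up-longitude; and it is a \emph{nondent} precisely when $i$ and $i+1$ have the same type. (This is the local content of Pete's encoding: indents and outdents are the two kinds of turns in the associated up-down path, while nondents are the straight pairs.) The hard part of the proof is exactly this case analysis, since the orientation assigned to each flanking vertical must be reconciled with both the location of the interior and the direction of traversal; everything after it is short.

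Granting the dictionary, the contradiction follows quickly. First I would record that $L$, being a single closed loop whose stitches span the longitudes from $i_{\min}$ to $i_{\max}$, has a vertical stitch at every integer longitude in $[i_{\min}, i_{\max}]$ and at least one horizontal stitch at every half-integer longitude in $(i_{\min}, i_{\max})$: for any such line $x=\ell$, the loop has points on both sides and hence crosses it, and any crossing point is a lattice point carrying a vertical stitch of longitude $\ell$ by Proposition~\ref{prop:curves}. By the maximality of $i_0$, every half-integer longitude $j + 1/2$ with $i + 1 \le j < i_{\max}$ carries only nondents, so the dictionary makes $j$ and $j+1$ the same type; chaining these equalities from $i+1$ up to $i_{\max}$ forces $i+1$ to be a down-longitude, just like $i_{\max}$. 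But an indent at $i_0$ requires $i+1$ to be an up-longitude, a contradiction. Hence $i_0$ is an outdent longitude, and the analogue for horizontal slicing follows by interchanging the roles of the two axes.
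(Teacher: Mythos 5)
Your proof is correct, but it takes a genuinely different route from the one in the paper. The paper's argument is a local traversal: orienting $L$ counterclockwise and supposing $i_0$ is an indent longitude, it starts at the south-most horizontal stitch $s$ of $L$ at longitude $i_0$, notes that the next stitch $t$ must head south, and observes that the subpath from $t$ to the next return $s'$ to longitude $i_0$ lives strictly east of $i_0$ and hence, by the maximality of $i_0$, can never reverse the orientation of its vertical stitches; since $s'$ lies north of $s$, the path must nonetheless turn north somewhere, a contradiction. You instead argue by global propagation: you set up the dictionary identifying an outdent, indent, or nondent at longitude $i+1/2$ with the respective signatures (up, down), (down, up), and (equal) for the orientation types of longitudes $i$ and $i+1$, anchor the computation at the eastmost longitude $i_{\max}$ (necessarily a down-longitude), and chain through the nondent longitudes east of $i_0$ to force $i+1$ to be a down-longitude, contradicting the (down, up) signature an indent would require. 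Both proofs ultimately rest on the same two facts---nondents preserve the orientation of the flanking vertical stitches, and the closedness of $L$ pins down the orientation at the east end---but your version makes explicit the local rule underlying Pete's encoding in Theorem~\ref{thm:pete}, which is a reusable statement in its own right. The price is the auxiliary claim that $L$ meets every integer longitude in $[i_{\min},i_{\max}]$ and every intermediate half-integer longitude; your crossing argument for this is stated a bit loosely (a loop can meet a line without crossing it transversally), but the claim is easily secured by noting that the longitudes of consecutive vertical stitches along $L$ differ by exactly $1$, so they sweep out an interval of integers.
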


\begin{proof}
As usual, we orient $L$ counterclockwise. Suppose instead that $i_0$ is an indent longitude of $L$, and let $s$ be the south-most horizontal stitch of $L$ with longitude $i_0$. Consider traversing $L$ according to its orientation. Let $t$ be the stitch immediately after $s$ in this traversal. Let $s'$ be the first (horizontal) stitch with longitude $i_0$ that we reach after $s$ during this traversal. It follows from the choice of $s$ that the unit grid cell immediately south of $s$ is in the exterior of $L$, so $s$ is oriented from west to east. This implies that all stitches in $L_{[t,s']}$ except for $s'$ have longitudes strictly greater than $i_0$, so it follows from the definition of $i_0$ that $L_{[t,s']}$ cannot cross through an outdent longitude of $L$. Since $i_0$ is an indent longitude of $L$, we have $\lat(t)=\lat(s)-1/2$, and $t$ must be oriented from north to south. The choice of $s$ guarantees that $\lat(s)<\lat(s')$, so the path $L_{[t,s']}$, which starts off traveling south, must have some stitch oriented from south to north. But this forces $L_{[t,s']}$ to cross through an outdent longitude of $L$, which is a contradiction.  
\end{proof}

\begin{remark}
In an earlier version of this paper, we used the machinery of slicing to give an alternative proof of Pete's Theorems~\ref{ThmExtremalMatch}, \ref{ThmTwoStitches}, and~\ref{ThmOdd}.  Our argument followed a very complicated double-induction scheme; although certainly less elegant than Pete's Theorem~\ref{thm:pete}, it demonstrated the versatility of the slicing method.
\end{remark}

\section{Lengths and areas}\label{sec:length-area}
Our goal in this section is to prove Theorems~\ref{thm:possible-length} and~\ref{thm:possible-area}, which characterize the possible lengths and areas of hitomezashi loops.  The properties of slicing from the last section allow us to use inductive arguments.
%
When we study the effect that slicing has on a single strand, we also need to consider the other strands with which it merges, and the strands with which those strands merge, and so on.  This idea leads to the following definition.  Let $H$ be a hitomezashi pattern, and let $i_0$ be a (half-integer) longitude such that $\epsilon_{i_0-1/2}=\epsilon_{i_0+1/2}$.  For strands $S_1$ and $S_2$ in $H$, write $S_1 \sim' S_2$ if there exists a strand in $\vslice_{i_0}(H)$ that contains stitches from both $S_1$ and $S_2$.  Define the \dfn{intertwining equivalence relation} (at longitude $i_0$) to be the transitive closure $\sim$ of the relation $\sim'$, and say $S_1$ and $S_2$ are \dfn{intertwined} if $S_1 \sim S_2$.
We can of course also make the analogous definition for horizontal slicing.


\begin{proof}[Proof of Theorem~\ref{thm:possible-length}]
We proceed by induction on width.  For the base case, note that Theorem~\ref{thm:possible-length} holds trivially for the unique hitomezashi loop of width $1$ (viz., the square).  Now let $L$ be a hitomezashi loop of width $w_0$ in a hitomezashi pattern $H$, and assume inductively that Theorem~\ref{thm:possible-length} holds for all loops of width strictly smaller than $w_0$.  Let $i_0$ be an outdent longitude for $L$.  Let $\{L_1, \ldots, L_q\}$ be the intertwining equivalence class of $L$ (at longitude $i_0$), with the convention $L_1=L$; Lemmas~\ref{prop:outdent-deletion-splice} and~\ref{lem:splitting} guarantee that $q$ is finite and that $L_2, \ldots, L_q$ are all hitomezashi loops contained in $\overline{L}$.  Let $M_1, \ldots, M_r$ be the post-slice components of $L_1, \ldots, L_q$; Proposition~\ref{prop:outdent-deletion-split} guarantees that $M_1, \ldots, M_r$ are all hitomezashi loops contained in $\vslice_{i_0}(\overline{L})$.

Let $a$ denote the total number of local moves that involve stitches from $L_1, \ldots, L_q$.  The number of splits minus the number of splices is $q-r$ (if the local moves are applied in any fixed order); in particular, $a$ and $q-r$ have the same parity.  Let $b$ denote the number of horizontal stitches of $L_1, \ldots, L_q$ at longitude $i_0$; it is clear that $b$ is even.  We have
\begin{equation}\label{eq:length}
\length(L_1)+\cdots+\length(L_q)=\length(M_1)+\cdots+\length(M_r)+4a+4b  
\end{equation}
since all stitches of $L_1, \ldots, L_q$ are preserved in $M_1, \ldots, M_r$ except for the stitches between the longitudes $i_0-1$ and $i_0+1$.
We know by induction that $\length(L_2)\equiv\cdots\equiv\length(L_q)\equiv\length(M_1)\equiv\cdots\equiv\length(M_r)\equiv 4\pmod 8$. Since $a \equiv q-r \pmod{2}$ and $b$ is even, we deduce from \eqref{eq:length} that
\[\length(L_1)+4(q-1)\equiv 4r+4(q-r) \pmod{8}.\]
We conclude that
$\length(L_1)\equiv 4 \pmod{8}$,
as desired.
\end{proof}

One could use a complicated version of the above argument to prove Theorem~\ref{thm:possible-area}, but it is simpler to deduce Theorem~\ref{thm:possible-area} from Theorem~\ref{thm:possible-length}.

\begin{proof}[Proof of Theorem~\ref{thm:possible-area}]
We will apply Pick's Theorem to the polygonal region bounded by $L$.  The lattice points in the interior of $L$ can be partitioned according to the hitomezashi loops on which they lie; since the number of lattice points on each such loop is a multiple of $4$ (from Theorem~\ref{thm:possible-length}, say, although this is also not difficult to see directly), we know that the total number $X$ of lattice points in the interior of $L$ is a multiple of $4$.  Theorem~\ref{thm:possible-length} also tells us that the number $Y$ of lattice points on the loop $L$ is congruent to $4$ modulo $8$.  By Pick's Theorem, the area of the region bounded by $L$ is $X+Y/2-1$, and this is congruent to $1$ modulo $4$, as desired.
\end{proof}

\section{Counting loops and regions}\label{sec:counting}
Now that we have studied structural properties of hitomezashi loops, we  turn to some more enumerative/probabilistic problems. The central problem is to understand the number of loops and/or regions that appear in an $m\times n$ hitomezashi pattern $H$. Note that the number of regions of $H$ is equal to the sum of the number of loops of $H$ and the number of regions that touch the boundary of the $m\times n$ grid. The number of such boundary regions is at most $2(m+n)$, and this quantity will turn out to be negligible in the settings that concern us; hence, we will just focus on counting loops. As mentioned in the introduction, an interesting problem is to determine the behavior of $\mathbb E[\Loops({\mathcal H}_{1/2}(m,n))]$, the expected number of loops in an $m \times n$ hitomezashi pattern when the labels $\epsilon_i$ and $\eta_j$ are chosen independently at random and each label has an equal probability of being $0$ and $1$.  We will prove that as $m$ and $n$ tend to infinity, this expected value grows like $\frac{\pi^2-9}{12}mn$. We first need the following lemma. Recall that ${\bf L}$ is the set of all hitomezashi loops modulo translation. Let \[\mathbf{c}=\sum_{L\in{\bf L}}2^{-(\width(L)+\height(L)+2)}.\]  


\begin{lemma}\label{PropEmn}
As $\min \{m,n\}\to\infty$, we have \[\mathbb E[\Loops({\mathcal H}_{1/2}(m,n))]=(\mathbf{c}+o(1))mn.\] 
\end{lemma}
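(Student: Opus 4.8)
The plan is to compute $\mathbb E[\Loops(\mathcal H_{1/2}(m,n))]$ by linearity of expectation, summing over all possible loops $L$ (up to translation) the expected number of translated copies of $L$ that appear in the random $m\times n$ pattern. Fix a representative $L\in\mathbf L$ with width $w=\width(L)$ and height $h=\height(L)$. The key observation is that whether a specific translate of $L$ appears as an actual loop in the pattern depends only on the labels $\epsilon_i$ of the $w+1$ vertical grid lines and the labels $\eta_j$ of the $h+1$ horizontal grid lines that meet $\overline L$. By the construction of hitomezashi patterns, the stitches of $L$ are completely determined by these $w+h+2$ labels, and moreover (since $L$ is a valid loop) there is exactly one choice of these labels---namely one pattern and its complement, or more carefully a $2$-valued freedom from global complementation---that realizes $L$ at a given location. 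The precise count is that the probability a fixed translate of $L$ occurs is $2^{-(w+h+2)}$, matching the exponent in the definition of $\mathbf c$; I would verify this by noting that $L$ prescribes each of the $w+1$ vertical and $h+1$ horizontal relevant labels up to the single global parity ambiguity, but the factor of $2$ from complementation is absorbed because complementing all labels produces a genuinely different (translated) stitch configuration rather than the same loop.

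Next I would count the number of available positions. A translate of $L$ fits inside the $m\times n$ grid in roughly $(n-w)(m-h)$ ways (the bounding box of $L$ has width $w$ and height $h$, so its lower-left corner can sit at approximately $(n-w)(m-h)$ lattice positions). Combining, linearity of expectation gives
\[
\mathbb E[\Loops(\mathcal H_{1/2}(m,n))]=\sum_{L\in\mathbf L}(n-\width(L))(m-\height(L))\,2^{-(\width(L)+\height(L)+2)}+(\text{boundary corrections}),
\]
where the boundary corrections account for loops close to the edge of the grid whose occurrence probability is slightly perturbed and whose position count differs from the idealized $(n-w)(m-h)$. Writing $(n-w)(m-h)=mn-wm-hn+wh$ and factoring out $mn$, the main term becomes $mn\sum_{L\in\mathbf L}2^{-(w+h+2)}=\mathbf c\,mn$, while the lower-order terms are controlled by the sums $\sum_L (w+h)\,2^{-(w+h+2)}$ and $\sum_L wh\,2^{-(w+h+2)}$.

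The main obstacle, and the step requiring the most care, is establishing that $\mathbf c$ converges and that the error terms are genuinely $o(mn)$. For convergence I would use Pete's bijection (Theorem~\ref{thm:pete}) to reorganize the sum over $\mathbf L$ as a sum over pairs of Dyck paths of equal height: since $\width(L)$ and $\height(L)$ are determined by the semilengths of $P_{\ver}(L)$ and $P_{\hor}(L)$ via $w=2\cdot\mathrm{semilength}(P_{\ver})+1$ and $h=2\cdot\mathrm{semilength}(P_{\hor})+1$, the quantity $\mathbf c$ factors essentially into a sum over the Dyck-path statistics, and the number of Dyck paths of semilength $k$ is the Catalan number $C_k\le 4^k$. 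This gives $\sum_L 2^{-(w+h+2)}\le \tfrac14\sum_{w,h\text{ odd}}(\#\text{loops of that size})2^{-(w+h)}$, and one checks that the count of loops of width $w$ and height $h$ grows only sub-exponentially relative to $2^{w+h}$, forcing absolute convergence; the weighted sums with extra factors of $w$, $h$, or $wh$ converge for the same reason. Finally, to show the position-count and boundary discrepancies contribute only $o(mn)$, I would split the sum at a threshold (say loops with $w+h\le \log\min\{m,n\}$ versus larger loops): large loops contribute negligibly because their total weight is exponentially small, while for small loops the difference between the true position count and $(n-w)(m-h)$, together with the boundary-region adjustments, is $O((m+n)\cdot\text{(weighted size)})=o(mn)$. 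The delicate bookkeeping here---ensuring that near-boundary occurrence probabilities and truncated tails are simultaneously controlled---is where the real work lies.
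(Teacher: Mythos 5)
Your overall strategy is the paper's: linearity of expectation over translates of each $L\in\mathbf L$, occurrence probability $2^{-(\width(L)+\height(L)+2)}$, truncation at a threshold, and a squeeze to extract $(\mathbf c+o(1))mn$. Two points where you deviate deserve comment. First, your ``boundary corrections'' are a phantom: the event that a fixed translate of $L$ appears depends only on the labels of the $w+1$ vertical and $h+1$ horizontal grid lines meeting it, and these are i.i.d.\ fair coins no matter where the translate sits, so the probability is \emph{exactly} $2^{-(w+h+2)}$ at every admissible position; likewise the number of admissible positions is exactly $(m+1-h)(n+1-w)$. The paper's formula is therefore exact with no correction terms, and the only estimate needed is the comparison of $(m+1-h)(n+1-w)$ with $mn$ for the finitely many loops below the truncation threshold versus the tail above it. Second, and more substantively, your argument for the convergence of $\mathbf c$ is not sufficient as written. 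By Theorem~\ref{thm:pete}, the number of loops of width $w=2a+1$ and height $h=2b+1$ is $\sum_k D(a,k)D(b,k)\le C_aC_b$, and the crude bound $C_k\le 4^k$ gives $C_aC_b\le 4^{a+b}=2^{w+h-2}$; this exactly cancels the weight $2^{-(w+h+2)}$, leaving a constant per pair $(w,h)$ and hence a divergent sum. The loop counts are \emph{not} subexponential relative to $2^{w+h}$ --- they are smaller only by a polynomial factor --- so you genuinely need the asymptotic $C_k=\Theta(4^k k^{-3/2})$ (or the exact Kreweras-type evaluation used later in the paper) to get absolute convergence. The paper sidesteps this entirely with a neat self-contained trick: since $\Loops(\mathcal H_{1/2}(m,n))\le mn$ trivially, the lower bound $\bigl(\sum_{L\in\mathbf L_{\le r,\le r}}2^{-(w+h+2)}\bigr)(1-1/r)^2mn\le \mathbb E[\Loops]$ forces the partial sums defining $\mathbf c$ to be bounded by $(1-1/r)^{-2}$, so $\mathbf c$ converges with no input from Dyck-path enumeration. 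Your route can be repaired with the correct Catalan asymptotics, but as stated the convergence step has a real gap.
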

\begin{proof}
We can write down an exact formula for $\mathbb E[\Loops({\mathcal H}_{1/2}(m,n))]$.  If $L \in \mathbf{L}$ has width greater than $n$ or height greater than $m$, then it cannot appear as a loop in an $m \times n$ hitomezashi pattern.  If $L \in \mathbf{L}$ has width $w \leq n$ and height $h \leq m$, then there are exactly $(m+1-h)(n+1-w)$ translates of $L$ that could appear in an $m \times n$ hitomezashi pattern.  When $H$ is a randomly-chosen $m \times n$ hitomezashi pattern, each of these translates of $L$ appears with probability $2^{-(w+h+2)}$.  By linearity of expectation, the expected number of translates of $L$ is $$2^{-(w+h+2)}(m+1-h)(n+1-w).$$
Let $\mathbf{L}_{\leq m, \leq n}$ denote the set of loops in $\mathbf{L}$ with height at most $m$ and width at most $n$.  Summing over all loops in $\mathbf{L}$, we find that
\begin{equation}\label{eqn:exact-count}
\mathbb E[\Loops({\mathcal H}_{1/2}(m,n))]=\sum_{L \in \mathbf{L}_{\leq m, \leq n}}2^{-(\width(L)+\height(L)+2)}(m+1-\height(L))(n+1-\width(L)).
\end{equation}
This is the promised exact formula. It follows immediately from this formula that $E_{m,n}\leq \mathbf{c}mn$. Using the positivity of all summands involved, we note that for $\min\{m,n\} \geq r^2$, we have \begin{align*}
    \frac{\mathbb E[\Loops({\mathcal H}_{1/2}(m,n))]}{mn} &\geq\sum_{L \in \mathbf{L}_{\leq r, \leq r}}2^{-(\width(L)+\height(L)+2)}\left(1-\frac{\height(L)-1}{m}\right)\left(1-\frac{\width(L)-1}{n}\right) \\ 
    &\geq\left(\sum_{L \in \mathbf{L}_{\leq r, \leq r}}2^{-(\width(L)+\height(L)+2)}\right)(1-1/r)^2.
\end{align*} 
Hence,
\begin{equation}\label{eqn:c-ineq}
\left(\sum_{L \in \mathbf{L}_{\leq r, \leq r}}2^{-(\width(L)+\height(L)+2)}\right)(1-1/r)^2mn\leq \mathbb E[\Loops({\mathcal H}_{1/2}(m,n))] \leq \mathbf{c}mn.
\end{equation}

Since $\mathbb E[\Loops({\mathcal H}_{1/2}(m,n))]$ is trivially bounded above by $mn$, the first inequality in \eqref{eqn:c-ineq} gives
\[\sum_{L \in \mathbf{L}_{\leq r, \leq r}}2^{-(\width(L)+\height(L)+2)}\leq (1-1/r)^{-2},\]
so the infinite sum defining $\mathbf{c}$ converges.  As $r\to\infty$, the coefficient of $mn$ on the left-hand side of \eqref{eqn:c-ineq} approaches $\mathbf{c}$, so $\mathbb E[\Loops({\mathcal H}_{1/2}(m,n))]=(\mathbf{c}+o(1))mn$. 
\end{proof}

Together with Lemma~\ref{PropEmn}, the following proposition completes the proof of Theorem~\ref{thm:expected-regions}.
\begin{theorem}\label{ThmEmn}
We have
$$\mathbf{c}=\frac{\pi^2-9}{12}.$$
\end{theorem}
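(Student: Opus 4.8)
The plan is to use Pete's bijection (Theorem~\ref{thm:pete}) to convert the defining sum for $\mathbf{c}$ into a sum over pairs of Dyck paths, and then evaluate that sum via the generating function for Dyck paths of a fixed height. If $L\in\mathbf{L}$ has width $w$ and height $h$, then $P_{\ver}(L)$ and $P_{\hor}(L)$ have semilengths $p:=(w-1)/2$ and $q:=(h-1)/2$, so $w+h+2=2p+2q+4$ and hence $2^{-(w+h+2)}=\tfrac{1}{16}(1/4)^{p}(1/4)^{q}$. Since $L\mapsto(P_{\ver}(L),P_{\hor}(L))$ is a bijection from $\mathbf{L}$ onto pairs of Dyck paths of equal height, summing over $\mathbf{L}$ is the same as summing over all such pairs. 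Writing $f_k(x)=\sum_D x^{|D|}$, where the sum runs over Dyck paths $D$ of height exactly $k$ and $|D|$ denotes semilength, the pairs of common height $k$ contribute $f_k(1/4)^2$, so I would obtain
$$\mathbf{c}=\frac{1}{16}\sum_{k\geq 0}f_k(1/4)^2.$$
All terms here are positive, and convergence is free since Lemma~\ref{PropEmn} already guarantees $\mathbf{c}<\infty$, so the rearrangement is harmless.

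Next I would determine $f_k$. Let $F_k(x)$ be the generating function for Dyck paths of height at most $k$. The first-return decomposition gives the continued-fraction recursion $F_k=1/(1-xF_{k-1})$ with $F_0=1$. Writing $F_k=p_k/p_{k+1}$ converts this into the linear recursion $p_{k+1}=p_k-xp_{k-1}$ with $p_0=p_1=1$. Since $f_k=F_k-F_{k-1}$, a short determinant (``Casoratian'') computation shows $p_k^2-p_{k-1}p_{k+1}=x^k$, and therefore
$$f_k(x)=\frac{x^k}{p_k(x)\,p_{k+1}(x)}.$$

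The crucial step is the evaluation at $x=1/4$. There the characteristic equation $t^2-t+1/4=0$ of the recursion for $p_k$ has the double root $t=1/2$, so $p_k(1/4)=(A+Bk)2^{-k}$, and the initial conditions force $p_k(1/4)=(k+1)2^{-k}$. Substituting gives the clean closed form
$$f_k(1/4)=\frac{(1/4)^k}{\tfrac{k+1}{2^k}\cdot\tfrac{k+2}{2^{k+1}}}=\frac{2}{(k+1)(k+2)}.$$
Plugging this in and reindexing with $n=k+1$ yields
$$\mathbf{c}=\frac{1}{16}\sum_{k\geq 0}\frac{4}{(k+1)^2(k+2)^2}=\frac{1}{4}\sum_{n\geq 1}\frac{1}{n^2(n+1)^2}.$$
Expanding $\tfrac{1}{n^2(n+1)^2}=\left(\tfrac1n-\tfrac1{n+1}\right)^2=\tfrac{1}{n^2}+\tfrac{1}{(n+1)^2}-\tfrac{2}{n(n+1)}$ and using $\sum_{n\geq1}1/n^2=\pi^2/6$ together with the telescoping identity $\sum_{n\geq1}1/(n(n+1))=1$ gives $\sum_{n\geq1}1/(n^2(n+1)^2)=\pi^2/3-3$, so $\mathbf{c}=\tfrac14(\pi^2/3-3)=\frac{\pi^2-9}{12}$.

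The main obstacle is the middle step: identifying the height-exactly-$k$ generating function $f_k$ and recognizing that the natural evaluation point $x=1/4$ (the radius of convergence of the Catalan series) is precisely where the recursion for $p_k$ degenerates to a repeated root, producing the tractable formula $p_k(1/4)=(k+1)2^{-k}$. Everything downstream is a routine partial-fraction and $\zeta(2)$ computation. As a sanity check, $\sum_{k\geq0}f_k(1/4)=\sum_{k\geq0}\tfrac{2}{(k+1)(k+2)}=2$, matching the value of the Catalan generating function at $1/4$.
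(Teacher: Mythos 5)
Your proof is correct and follows essentially the same route as the paper: convert $\mathbf{c}$ into $\frac{1}{16}\sum_{k\ge 0}F_k(1/4)^2$ via Pete's bijection, use the formula $F_k(x)=x^k/(p_k(x)p_{k+1}(x))$ with $p_{k+1}=p_k-xp_{k-1}$, evaluate at the double-root point to get $p_k(1/4)=(k+1)2^{-k}$, and finish with partial fractions and $\zeta(2)$. The only difference is that you derive the height-$k$ generating function from the first-return continued fraction and a Casoratian identity, whereas the paper simply cites Kreweras for it.
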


\begin{proof}
Let $D(n,k)$ denote the number of Dyck paths of semilength $n$ with height $k$ (with the convention that $D(0,0)=1$). It follows immediately from Theorem~\ref{thm:pete} that \[{\bf c}=\sum_{k\geq 0}\sum_{a,b\geq 0}D(a,k)D(b,k)\left(\frac{1}{2}\right)^{2a+2b+4}.\] Let $F_k(x)=\sum_{n\geq 0}D(n,k)x^k$. Then \[F_k(x)^2=\sum_{a,b\geq 0}D(a,k)D(b,k)x^{a+b},\] so ${\bf c}=\frac{1}{16}\sum_{k\geq 0}F_k(1/4)^2$. Kreweras \cite[Page~37]{Kreweras} showed that \begin{equation}\label{Eq:Kreweras}
    F_k(x)=\frac{x^k}{f_k(x)f_{k+1}(x)},
\end{equation} where the series $f_m(x)$ are defined by the recurrence $f_m(x)=f_{m-1}(x)-xf_{m-2}(x)$ subject to the initial conditions $f_0(x)=f_1(x)=1$. We claim that $f_m(1/4)=(m+1)/2^m$ for all $m \geq 0$.  The claim clearly holds for $m=0,1$, and for larger $m$ we inductively compute
$$f_m(1/4)=\frac{m}{2^{m-1}}-\frac{1}{4} \cdot \frac{m-1}{2^{m-2}}=\frac{2m-(m-1)}{2^{m}}=\frac{m+1}{2^m}.$$
Combining this with \eqref{Eq:Kreweras} yields 
\begin{align*}
{\bf c}&=\frac{1}{16}\sum_{k\geq 0}F_k(1/4)^2 \\
&=\frac{1}{16}\sum_{k\geq 0}\left(\frac{(1/4)^k}{f_k(1/4)f_{k+1}(1/4)}\right)^2 \\
&=\frac{1}{16}\sum_{k\geq 0}\left(\frac{2}{(k+1)(k+2)}\right)^2 \\
&=\frac{1}{4}\sum_{k\geq 1}\frac{1}{k^2(k+1)^2} \\
&=\frac{1}{4}\sum_{k\geq 1}\left(\frac{1}{k^2}+\frac{1}{(k+1)^2}-2\left(\frac{1}{k}-\frac{1}{k+1}\right)\right) \\
&=\frac{1}{4}\sum_{k\geq 1}\frac{1}{k^2}+\frac{1}{4}\sum_{k\geq 1}\frac{1}{(k+1)^2}-\frac{1}{2}\sum_{k\geq 1}\left(\frac{1}{k}-\frac{1}{k+1}\right) \\
&=\frac{1}{4}\cdot\frac{\pi^2}{6}+\frac{1}{4}\left(\frac{\pi^2}{6}-1\right)-\frac{1}{2} \\
&=\frac{\pi^2-9}{12}. \qedhere
\end{align*}
\end{proof}

\section{Open Problems}\label{sec:open}
In this final section, we reiterate several of the open problems that have arisen in this paper, and we present other ideas for future work on hitomezashi patterns.

\subsection{Dual hitomezashi patterns}
Given a hitomezashi pattern $H$ with labels $\epsilon_i$ and $\eta_j$, we can define the \dfn{dual} hitomezashi pattern of $H$ to be the hitomezashi pattern $H^*$ with labels ${\epsilon}^*_i=1-\epsilon_i$ and $\eta^*_j=1-\eta_j$.  In other words, $H^*$ is obtained from $H$ by interchanging all of the stitches and non-stitches on grid lines.  See Figure~\ref{FigHito15}.  This notion first appeared in the paper of Hayes and Seaton \cite{HayesSeaton}, who described the dual as the result of flipping over the cloth on which the hitomezashi pattern is embroidered.  There are many questions that one could ask about the relationship between a hitomezashi pattern $H$ and its dual $H^*$.  For instance, in the case where $H$ is an $m \times n$ hitomezashi pattern, what is the relationship between $\Loops(H)$ and $\Loops(H^*)$?  If $L$ is a hitomezashi loop in $H$, then what can we say about the size, number, and structure of the hitomezashi loops in $H^*$ that are contained in $\overline{L}$?

\begin{figure}[ht]
 \centering
 \includegraphics[height=4.51cm]{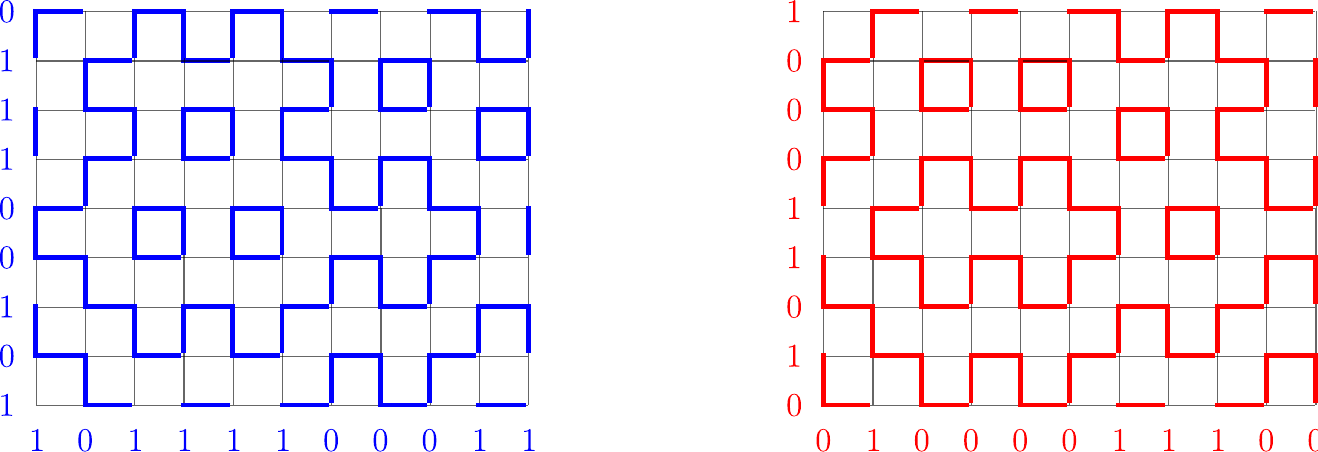}
  \caption{Dual hitomezashi patterns.}\label{FigHito15}
\end{figure}

\subsection{Enumeration of hitomezashi loops}

As described in Section~\ref{sec:pete}, Pete's results allow for the enumeration of hitomezashi loops (up to translation) by height and width.  It is also natural to consider the enumeration of hitomezashi loops according to other statistics such as length and, area.  Indeed, the asymptotic enumeration of polyominoes according to area and/or perimeter has attracted a great deal of attention \cite{Barequet, Conway, ConwayGuttmann, Golomb, GuttmannBook, Jensen, Klarner}.

\subsection{Indent slicing}
In Section~\ref{sec:slice}, we focused on the effect of slicing a hitomezashi loop at an outdent longitude, and we mentioned that the case of slicing at an indent longitude is potentially more complicated.  In particular, a (slightly technical) open problem is to formulate a version of Proposition~\ref{prop:outdent-deletion-split} for slicing a hitomezashi loop at an indent longitude.  What happens if we look at general strands in addition to loops?

\subsection{Other variations}
There are many possible variations on hitomezashi patterns; we list just a few here.
\begin{itemize}
    \item One place to start is generalizing hitomezashi patterns to higher dimensions.  It is not immediately clear what the most interesting generalization is.  In three dimensions, for instance, one could choose a $0/1$ label for each integer-valued grid line and alternate included and excluded line segments.  Each connected component of the resulting pattern would be a $1$-skeleton, and there would be $3$ line segments incident to each lattice point.  For another possibility, consider \[(\mathbb Z\times\mathbb R\times \mathbb R)\cup(\mathbb R\times\mathbb Z\times\mathbb R)\cup(\mathbb R\times\mathbb R\times\mathbb Z),\] which the union of grid planes in $\mathbb R^3$, each of which is parallel to one of the standard coordinate planes. One could choose a $0/1$ label for each such grid plane and draw one of two complementary unit checkerboards.  In this case, the connected components of the pattern would form surfaces, and the bounded surfaces arising in this manner would be the boundaries of special \emph{polycubes} (see, e.g., \cite{Gardner, GuttmannBook}).  It would be interesting to connect these patterns to triples of Dyck paths, in the style of~\cite{Pete}.
    \item One can also define hitomezashi patterns on the torus.  In this case, every stitch will be part of a loop.  What can we say about the structure of loops in toric hitomezashi patterns?
    \item Another variation, suggested by MacDonald, consists of placing stitches on a triangular grid; watch \cite{Numberphile} for more details about this idea.
\end{itemize}

\section*{Acknowledgements}
The first author is supported by an NSF Graduate Research Fellowship (grant DGE--1656466) and a Fannie and John Hertz Foundation Fellowship.  The second author is supported by an NSF Graduate Research Fellowship (grant DGE--2039656). We are grateful to Noga Alon and Susan Defant for helpful conversations.  We thank Omer Angel for drawing our attention to Pete's paper~\cite{Pete}.

\end{document}